\date{April 3, 2008}
\newcommand{\tq}{\, \big| \, }
\newtheorem{theorem}{\rm\bf Theorem}[section]
\newtheorem{proposition}{\rm\bf Proposition}[section]
\newtheorem{lemma}[proposition]{\rm\bf Lemma}
\newtheorem{corollary}[proposition]{\rm\bf Corollary}
\numberwithin{equation}{section}
\theoremstyle{definition}
\newtheorem{definition}[proposition]{\rm\bf Definition}
\theoremstyle{remark}
\newtheorem{remark}[proposition]{\rm\bf Remark}
\newtheorem{example}[proposition]{\rm\bf Example} 
\newtheorem{examples}[proposition]{\rm\bf Examples}
\def\interieur#1{\mathord{\mathop{\kern 0pt #1}\limits^\circ}}
\title{Weak Finsler Strutures and the Funk Metric}
\author{Athanase Papadopoulos}
\address{A. Papadopoulos, Institut de Recherche Math{\'e}matique Avanc\'ee,
Universit{\'e} Louis Pasteur and CNRS,
7 rue Ren\'e Descartes,
 67084 Strasbourg Cedex - France} \email{papadopoulos@math.u-strasbg.fr}
\author{Marc Troyanov}
\address{M. Troyanov, Section de Math{\'e}matiques,  \'Ecole Polytechnique F{\'e}d\'erale de
Lausanne, 1015 Lausanne - Switzerland}
\email{marc.troyanov@epfl.ch}
\begin{document}

\begin{abstract}  
We discuss general   notions of metrics and of Finsler structures which we call \emph{weak metrics} and \emph{weak Finsler structures}.  Any convex domain carries a canonical weak Finsler structure, which we call its \emph{tautological weak Finsler structure}. We compute distances in the tautological weak Finsler structure of a domain and we show that these are given by
the so-called \emph{Funk weak metric}. We conclude the paper with a discussion of  geodesics,  of metric balls and of convexity properties of the
Funk weak metric.

\bigskip

\noindent AMS Mathematics Subject Classification: 52A, 53C60, 58B20
\noindent Keywords: Finsler structure, weak metric, Funk weak metric.
\end{abstract}

\maketitle
\tableofcontents

\section{Introduction}\label{intro}

A \emph{weak metric} on a set is a function defined on pairs of points in that set which is nonnegative, which can take the value $\infty$, which vanishes when the two points coincide and which satisfies the triangle inequality. Compared to an ordinary metric, a weak metric can thus degenerate and take infinite values. Besides, it need not be symmetric. This is  a very general notion which turns out to be useful in various situations. The terminology ``weak metric" is due to Ribeiro \cite{Rib}, but the notion can be at least traced back to the work of Hausdorff  (see \cite{Hausdorff}). In the paper  \cite{PT1}, a number of natural weak metrics are discussed. In the present paper, we are mostly interested in a class of weak metrics that is related to convex geometry and to a general notion of Finsler structures on manifolds. 

\medskip

A basic construction in convex geometry is the notion of \emph{Minkowski norm}, which associates to any convex set containing the origin in a vector space $V$ a translation-invariant homogenous weak metric on $V$. Finsler geometry is an extension  of this construction to an arbitrary manifold. We define a \emph{weak Finsler strucure} on a differentiable manifold to be a field of convex sets on that manifold. More precisely, a weak Finsler strucure is a subset of the tangent space of the manifold whose intersection with each fiber is an convex set containing the origin. The  Minkowski norm in each tangent space of a manifold endowed with a weak Finsler structure gives rise to a function defined on the total space of the tangent bundle. We call this function the \emph{Lagrangian} of the weak Finsler structure. Integrating this Lagrangian on piecewise smooth curves in the manifold defines a length structure and thus a notion of distance on the manifold. This distance is generally a weak metric.

\medskip

A case of special interest is when the manifold is a convex domain $\Omega$ in $\mathbb{R}^n$ and when the weak Finsler structure is obtained by replicating at each point of $\Omega$ the domain $\Omega$ itself. We call this the \emph{tautological} weak Finsler structure, and we study some of its basic properties in the present paper. More precisely, we first give a formula for the distance between two points. It turns out that this distance coincides with the metric introduced by P. Funk in  \cite{Funk}. We then study the geometry of balls and the geodesics in the Funk weak metric.

 Modern references on Finsler geometry include \cite{CS2005},  \cite{BBC}, \cite{BCS},  and  \cite{Paiva-Duran}. One of Herbert Busemann's major ideas, expressed in 
 \cite{Busemann1942},  
\cite{Busemann1944}, \cite{Busemann1955} and
\cite{Busemann1970}   is that Finsler geometry should be developed without local coordinates and without the use of differential calculus. This paper brings some results in that direction.

\section{Preliminaries on convex geometry}\label{s:1}

In this section, we recall  a few notions in convex geometry that will be used in the sequel.

 Given a convex subset $\Omega$ of $\mathbb{R}^n$, we shall denote $\overline{\Omega}$ its closure, $ \stackrel{o}\Omega$ its interior, and $\partial \Omega = \overline{\Omega}\setminus \stackrel{o}\Omega$  its boundary.

Let $\Omega\subset \mathbb{R}^n$ be a (not necessarily open) convex set and let $x$ be a point in $\Omega$.
\begin{definition}\label{def:radial}
The \emph{radial function of $\Omega$ with respect to $x$} is the function $r_{\Omega,x}:\mathbb{R}^n\to \mathbb{R}_+\cup\{\infty\}$ defined by
\[
r_{\Omega,x}(\xi)=\sup\{t\in\mathbb{R}\ \vert \ (x+t\xi)\in \Omega\}.\]
\end{definition}

\begin{definition}\label{def:Minkowski} The \emph{Minkowski function of $\Omega$ with respect to $x$} is the function  $p_{\Omega,x}:\mathbb{R}^n\to \mathbb{R}_+\cup\{\infty\}$ defined by
\[
p_{\Omega,x}(\xi) = \frac{1}{r_{\Omega,x}(\xi)}.\]
\end{definition}

Classically, the Minkowski function is associated to an open convex subset $\Omega$ of $\mathbb{R}^n$  containing the origin $0$, and taking $x=0$. This function is sometimes called the \emph{Minkowski weak norm} of the convex  (see e.g. \cite{Eggleston},  \cite{Minkowski},  \cite{Thompson} and  \cite{Webster}).

We  also recall that for any convex set $\Omega$ in $\mathbb{R}^n$, there exists a well-defined smallest affine subspace $L$ of  $\mathbb{R}^n$ containing $\Omega$, and that the intersection of $\Omega$ with $L$ has nonempty interior in $L$. We denote by  $\mathrm{RelInt}(\Omega)$ this interior, called the \emph{relative interior of the convex set $\Omega$}.

The following proposition collects a few basic properties of the Minkowski function. In particular, Property  (\ref{pro:8}) tells us that we can reconstruct the relative interior of $\Omega$ from the Minkowski function of $\Omega$ at any point. The proofs are easy.
\begin{proposition}\label{prop:min}
Let $\Omega$ be a convex subset of $\mathbb{R}^n$. For every $x$ in $\Omega$ and for every $\xi$ and $\eta$ in $\mathbb{R}^n$, we have

\begin{enumerate}
\item $p_{\Omega,x}(\xi)= \inf\{t\geq 0\ \vert \ \xi\in t(\Omega-x)\}$. (Here, $\Omega-x$ denotes the Minkowski sum of $\Omega$ and $-x$.)
\item If the ray $\{x+t\xi\ \vert \ t\geq 0\}$ is contained in $\Omega$, then $p_{\Omega,x}(\xi)= 0$. 
\item $p_{\Omega,x}(\lambda\xi)= \lambda p_{\Omega,x}(\xi)$ for $\lambda\geq 0$.
\item $p_{\Omega,x}(\xi+\eta)\leq p_{\Omega,x}(\xi) + p_{\Omega,x}(\eta)$.
\item The Minkowski function $p_{\Omega,x}$ is convex.
\item  If $x$ is in $\stackrel{o}\Omega$, then $p_{{\Omega},x}$ is continuous.
\item If ${\Omega}$ is closed, then ${\Omega}=\{y\in\mathbb{R}^n \ \vert \ y=x+\xi, p_{{\Omega},x}(\xi)\leq 1\}$.
\item \label{pro:8} $\mathrm{RelInt}(\Omega) = \{y=x+\xi \vert \ p_{{\Omega},x}(\xi) < 1\}$.
\item \label{pp} If $\Omega_1 = \mathrm{RelInt}(\Omega)$, then $p_{{\Omega_1},x} = p_{{\Omega},x}$.

\end{enumerate}
\end{proposition}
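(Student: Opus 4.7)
The plan is to dispatch the items in roughly the order stated, since each builds on the previous ones. Properties (1)--(3) are essentially formal manipulations of the definitions: for (1), I would unwind $\xi\in t(\Omega-x)$ as $x+\xi/t\in\Omega$, so the condition becomes $1/t\le r_{\Omega,x}(\xi)$, whence the infimum equals $1/r_{\Omega,x}(\xi)=p_{\Omega,x}(\xi)$ (treating the cases $r=0$, $r$ finite, $r=\infty$ uniformly). Property (2) is immediate: a ray in $\Omega$ forces $r_{\Omega,x}(\xi)=\infty$. For (3), the identity $r_{\Omega,x}(\lambda\xi)=\lambda^{-1}r_{\Omega,x}(\xi)$ for $\lambda>0$ follows directly from the definition of $r_{\Omega,x}$, and the $\lambda=0$ case is handled separately.

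For (4), I would use the characterization from (1). Given $s>p_{\Omega,x}(\xi)$ and $t>p_{\Omega,x}(\eta)$, one has $\xi/s,\eta/t\in\Omega-x$, so by convexity of $\Omega-x$ the convex combination
\[
\frac{\xi+\eta}{s+t}=\frac{s}{s+t}\cdot\frac{\xi}{s}+\frac{t}{s+t}\cdot\frac{\eta}{t}
\]
lies in $\Omega-x$. Hence $\xi+\eta\in(s+t)(\Omega-x)$, so $p_{\Omega,x}(\xi+\eta)\le s+t$; taking infima gives subadditivity. Property (5) is the usual deduction: a positively homogeneous subadditive function on a vector space is convex.

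For (6), I would exploit that $x\in\stackrel{o}\Omega$ implies the existence of a Euclidean ball $B(0,\varepsilon)\subset \Omega-x$, which by (1) gives $p_{\Omega,x}(\xi)\le \|\xi\|/\varepsilon$ for all $\xi$. Together with subadditivity, this yields the standard Lipschitz estimate $|p_{\Omega,x}(\xi)-p_{\Omega,x}(\eta)|\le \|\xi-\eta\|/\varepsilon$, which gives continuity. Property (7) follows from (1) combined with closedness: $p_{\Omega,x}(y-x)\le 1$ means there exist $t_n\searrow 1$ with $x+(y-x)/t_n\in\Omega$, and closedness forces the limit $y$ into $\Omega$; the reverse inclusion is trivial.

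The remaining items (8) and (9), which concern the relative interior, are the only places where some care is needed. I would reduce to the case of a convex set with non-empty interior by restricting everything to the affine hull $L$ of $\Omega$, noting that the definitions of $r_{\Omega,x}$ and $p_{\Omega,x}$ give $0/\infty$ values in directions transverse to $L-x$, so they behave identically when computed inside $L$. Within $L$, the standard fact that a segment from an interior point through any point of the relative interior can be prolonged beyond translates exactly into $r_{\Omega,x}(y-x)>1$, giving (8). Finally, (9) follows because the ray from $x$ meets $\Omega$ and $\mathrm{RelInt}(\Omega)$ in segments having the same supremum parameter, since the relative interior is dense in $\Omega$ along any such segment; hence $r_{\Omega_1,x}=r_{\Omega,x}$ and therefore $p_{\Omega_1,x}=p_{\Omega,x}$. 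The main (mild) obstacle is not to conflate the interior and relative interior when $\Omega$ is not full dimensional; everything else is formal.
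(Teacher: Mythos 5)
The paper offers no actual proof of this proposition (it merely declares ``The proofs are easy''), so there is nothing to compare your argument against line by line; the only question is whether your proof is correct. For items (1)--(7) it is: the translation between $r_{\Omega,x}$ and the infimum in (1), the convex-combination argument for subadditivity in (4), the Lipschitz bound $|p_{\Omega,x}(\xi)-p_{\Omega,x}(\eta)|\le \|\xi-\eta\|/\varepsilon$ in (6), and the limiting argument using closedness in (7) are all standard and carried out correctly.

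Items (8) and (9), however, contain a genuine gap --- one you have inherited from the statement itself, but which your argument does not close: both assertions are false unless $x$ lies in $\mathrm{RelInt}(\Omega)$, and your proof silently uses that hypothesis. For (8), observe that $p_{\Omega,x}(0)=0<1$ always, so the point $x$ itself always belongs to the right-hand side; hence (8) can only hold if $x\in\mathrm{RelInt}(\Omega)$ (take $\Omega=[0,1]\subset\mathbb{R}$ and $x=0$ for a concrete counterexample). Your sketch in fact only establishes the inclusion $\mathrm{RelInt}(\Omega)\subseteq\{y=x+\xi \mid p_{\Omega,x}(\xi)<1\}$, via prolongation of the segment beyond $y$; the reverse inclusion requires the line segment principle (if $x\in\mathrm{RelInt}(\Omega)$ and $z\in\Omega$, then every point strictly between them is in $\mathrm{RelInt}(\Omega)$), which genuinely needs $x$ to be a relative interior point. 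For (9), the justification ``the relative interior is dense in $\Omega$ along any such segment'' is false when the segment lies in the relative boundary: take $\Omega=[0,1]^2$, $x=(0,0)$, $\xi=(1,0)$; then $r_{\Omega,x}(\xi)=1$, yet the ray $\{x+t\xi \mid t\ge 0\}$ never meets $\mathrm{RelInt}(\Omega)=(0,1)^2$, so $r_{\Omega_1,x}(\xi)$ is the supremum of the empty set and $p_{\Omega_1,x}\neq p_{\Omega,x}$. Under the added hypothesis $x\in\mathrm{RelInt}(\Omega)$ --- which does hold in the only place the paper invokes (8) and (9), namely the proof of Lemma \ref{lem:zero}, where the relevant base point is the origin of the tangent space and is assumed interior --- both of your arguments go through via the line segment principle; you should state that hypothesis explicitly rather than appeal to the density claim.
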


In some cases, we can give explicit formulas for the Minkowski  function $p_{{\Omega},x}$. For instance, the Minkowski function of the closed  ball $B=B(0,R)$ in $\mathbb{R}^n$ of radius $R$ and center $0$ with respect to any point $x$ in $B$ is given by 
\[p_{B,x}(\xi) = \frac{\sqrt{\langle \xi,x\rangle^2+ (R^{2}-\vert x\vert^2)\vert \xi\vert^2}+\langle \xi,x\rangle}{(R^{2}-\vert x\vert^2)}.
\]
The Minkowski function of a half-space $ H= \{ x \in \mathbb{R}^n \tq \langle \nu , x \rangle  \leq s\}$,
where $\nu$ is a vector in $\mathbb{R}^n$ (which is orthogonal to the hyperplane bounding $H$) and where $s$ is a real number,
with respect to a point $x$ in $H$, is given by
\[  p_{H,x}(\xi) = 
  \max\left( \frac{\langle \nu ,\xi  \rangle}{s-\langle \nu , x \rangle},0\right).
\]
 
 We shall use this formula later on in this paper.
 We also recall the following:
 
\begin{definition}[Support hyperplane] Let $\Omega$ be  a nonempty subset of $\mathbb{R}^n$. An affine  hyperplane $A$ in $\mathbb{R}^n$ is called a \emph{support hyperplane} for ${\Omega}$ if  the relative interior of ${\Omega}$ is contained in one of the two closed half-spaces bounded by $A$ and if $\overline{\Omega}\cap A\not=\emptyset$.

If $A$ is a support hyperplane for ${\Omega}$ and if $x$ is a point in $\overline{{\Omega}}\cap A$, then $A$ is called a \emph{support hyperplane for ${\Omega}$ at $x$.} When $\Omega \subset \mathbb{R}^2$, then $A$ is called a {\it support line}.
\end{definition}

Suppose now that $\Omega$ a convex subset of $\mathbb{R}^n$.
It is known that any point on the boundary of ${\Omega}$ is contained in at least one of its support hyperplanes (see e.g. \cite{Eggleston} p. 20). 
The intersection of ${\Omega}$ with any of its support hyperplanes is a convex set which is nonempty if ${\Omega}$ is closed. This intersection is not always reduced to a point.

We recall the notion of a strictly convex subset in $\mathbb{R}^n$, and before that we note the following classical proposition:
    
 \begin{proposition}\label{prop: strictly-convex}
Let $\Omega$ be an open convex subset of $\mathbb{R}^n$. Then, the following are equivalent:
\begin{enumerate}
\item $\partial\Omega$ does not contain any nonempty open affine segment;

\item each support hyperplane of $\Omega$ intersects $\partial\Omega$ in exactly one point;

\item support hyperplanes at distinct points of $\partial\Omega$ are distinct;

\item any linear function on $\mathbb{R}^n$ has exactly one maximum on $\partial\Omega$.
\end{enumerate} 
\end{proposition}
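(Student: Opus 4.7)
The strategy is a cyclic chain of implications $(1)\Rightarrow(2)\Rightarrow(3)\Rightarrow(4)\Rightarrow(1)$. Each step needs only the tools already collected in the preliminaries: a support hyperplane leaves $\Omega$ in one closed half-space; $\Omega$ is open (so no point of a support hyperplane lies in $\Omega$); and every boundary point admits at least one support hyperplane.

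For $(1)\Rightarrow(2)$ I argue by contraposition. If a support hyperplane $A$ of $\Omega$ meets $\partial\Omega$ in two distinct points $x,y$, then the affine segment $[x,y]$ is contained in $A$ and also in the convex set $\overline{\Omega}$. Because $\Omega$ is open and lies in one closed half-space bounded by $A$, the hyperplane $A$ is disjoint from $\Omega$, so $[x,y]\subset A\cap\partial\Omega$, contradicting (1). The implication $(2)\Rightarrow(3)$ is almost tautological: a hyperplane supporting $\Omega$ at two distinct boundary points $x$ and $y$ would contain both of them in $A\cap\partial\Omega$, contradicting (2).

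For $(3)\Rightarrow(4)$, given a linear function $\ell$ on $\mathbb{R}^n$ that attains its maximum value $c$ over $\partial\Omega$ at some point $x$, the affine hyperplane $\{\ell=c\}$ is a support hyperplane of $\Omega$ at $x$ (since $\ell\leq c$ on $\overline{\Omega}$). If the maximum were also attained at a second point $y\neq x$, the same hyperplane would support $\Omega$ at two distinct boundary points, contradicting (3). Finally, for $(4)\Rightarrow(1)$, suppose $\partial\Omega$ contains a nondegenerate segment $[x,y]$. Pick a support hyperplane $A$ at the midpoint $m=(x+y)/2$, written as $\{\ell=\ell(m)\}$ with $\ell\leq\ell(m)$ on $\Omega$. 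Then $\ell(x),\ell(y)\leq\ell(m)$, while $\ell(x)+\ell(y)=2\ell(m)$; this forces $\ell(x)=\ell(y)=\ell(m)$, exhibiting a linear function with two distinct maxima on $\partial\Omega$ and contradicting (4).

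There is no real obstacle in the argument; the only minor subtlety is in $(3)\Rightarrow(4)$, where one must be careful that the maximum of $\ell$ on $\partial\Omega$ is actually attained. This is automatic when $\Omega$ is bounded, and more generally condition (4) should be read as ``whenever a linear function attains a maximum on $\partial\Omega$, that maximum is unique''. Once this convention is in place, the four implications chain together cleanly and the proposition follows.
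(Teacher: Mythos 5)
The paper offers no proof to compare against: Proposition~\ref{prop: strictly-convex} is introduced with the words ``we note the following classical proposition'' and is stated without argument, so your write-up is being measured only against the standard proof it alludes to. Your cyclic chain $(1)\Rightarrow(2)\Rightarrow(3)\Rightarrow(4)\Rightarrow(1)$ is exactly that standard argument, and the steps $(1)\Rightarrow(2)$, $(2)\Rightarrow(3)$ and $(4)\Rightarrow(1)$ are complete and correct as written (in $(1)\Rightarrow(2)$ you rightly use that $\Omega$ open forces $A\cap\Omega=\emptyset$, so the segment lands in $\partial\Omega$; in $(4)\Rightarrow(1)$ the midpoint trick is clean). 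You are also right to flag the attainment issue in (4), which the paper's loose phrasing ignores; one should similarly exclude the zero linear function, for which ``exactly one maximum on $\partial\Omega$'' fails for every $\Omega$.

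The one soft spot is the parenthetical justification in $(3)\Rightarrow(4)$: from $\ell\leq c$ on $\partial\Omega$ you conclude $\ell\leq c$ on $\overline{\Omega}$. This is correct for bounded $\Omega$, where $\overline{\Omega}$ is the convex hull of $\partial\Omega$, but it can fail for unbounded $\Omega$: if $\Omega=\{\ell>0\}$ is an open half-space and $c=0=\max_{\partial\Omega}\ell$, then $\ell>c$ on all of $\Omega$. The conclusion you need --- that $\{\ell=c\}$ is a support hyperplane --- is still true there ($\Omega$ simply lies on the other side), but establishing it in general requires an extra argument: if some $z\in\Omega$ has $\ell(z)>c$, one checks that the ray from any boundary point through $z$ cannot exit $\Omega$ beyond $z$ (it would exit at a boundary point with $\ell>c$), deduces that the open half-space $\{\ell>c\}$ is contained in $\Omega$, and then that $\Omega\subset\{\ell\geq c\}$ (otherwise convexity would force $\Omega$ to contain a slab $\{\ell>c-\delta\}$ and no boundary point could satisfy $\ell=c$). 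With that dichotomy in hand, $\{\ell=c\}$ supports $\Omega$ at both maximizers and the contradiction with (3) goes through. Either add this, or state the proposition (as the paper effectively uses it) for bounded $\Omega$, where your one-line justification is airtight.
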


 \begin{definition}[Strictly convex subset]
Let $\Omega$ be an open convex subset of $\mathbb{R}^n$. Then, $\Omega$ is said to be \emph{ strictly convex} if one (or, equivalently, all) the properties of Proposition \ref{prop: strictly-convex} are satisfied.
\end{definition}

\section{The notion of weak metric}

 \begin{definition}
 A \emph{weak metric} on a set $X$ is a function $\delta : X\times X\to \mathbb{R}_+ \cup\{\infty\}$ satisfying
 \begin{enumerate}
 \item $\delta(x,x)=0$ for all $x$ in $X$;
 \item $\delta(x,z)\leq \delta(x,y)+\delta(y,z)$ for all $x$, $y$ and $z$ in $X$.
 \end{enumerate}
 \end{definition}
 
We say that such a weak metric $\delta$ is \emph{symmetric} if $\delta(x,y)=\delta(y,x)$ for all $x$ and $y$ in $X$; that it is \emph{finite} if $\delta(x,y)<\infty$ for every $x$ and $y$ in $X$; that $\delta$ is \emph{strongly separating} if we have the equivalence
\[\min(\delta(x,y),\delta(y,x))=0\iff x=y;\]
and that $\delta$ is  \emph{weakly separating} if we have the equivalence
\[\max(\delta(x,y),\delta(y,x))=0\iff x=y.\]

We recall that the notion of weak metric already appears in the work of Hausdorff (cf. \cite{Hausdorff}, in which Hausdorff defines asymmetric distances on various sets of subsets of a metric space).

\begin{definition}[Geodesic] Let  $(X,\delta)$ be a weak metric space and let $I\subset\mathbb{R}$ be an interval. We say that a map $\gamma:I\to X$ is \emph{geodesic} if for every $t_1$, $t_2$ and $t_3$ in $I$ satisfying $t_1\leq t_2\leq t_3$ we have \[\delta(\gamma(t_1),\gamma(t_2))+ \delta(\gamma(t_2),\gamma(t_3))=\delta(\gamma(t_1),\gamma(t_3)).\]
\end{definition}

Weak metrics were extensively studied by Busemann, cf. \cite{Busemann1942},  \cite{Busemann1944}, \cite{Busemann1955} \& \cite{Busemann1970}. A basic example of a weak metric defined on a convex set in $\mathbb{R}^n$ is the following:

\begin{example} \label{ex:Minkowski} Let $\Omega\subset\mathbb{R}^n$ be a convex set such that $0\in\overline{\Omega}$  and let  $p(\xi) = p_{\Omega,0}(\xi)  =\inf\{t>0\ \vert \  \xi\in t \, \Omega\}$ be the Minkowski weak norm centered at $0$ of $\Omega$.
Then, the function $\delta:\mathbb{R}^n\times \mathbb{R}^n\to \mathbb{R}_+\cup\{\infty\}$ defined by 
\[\delta(x,y)=p(y-x)\]
is a weak metric on $\mathbb{R}^n$. For this weak metric, we have the following equivalences:
\begin{enumerate}
\item $\delta$ is finite $\iff$ $0\in \stackrel{o}\Omega$;
\item $\delta$ is symmetric $\iff$ $\Omega=-\Omega$;
\item $\delta$ is strongly separating $\iff$ $\Omega$ is bounded;
\item $\delta$ is weakly separating $\iff$ $\Omega$ does not contain any Euclidean line.
\end{enumerate}
\end{example}
 
 The weak metric on $\mathbb{R}^n$  defined in Example \ref{ex:Minkowski} is called  the  \emph{Minkowski weak metric associated to $\Omega$}. The associated weak metric space $(\mathbb{R}^n,\delta)$ is called a \emph{weak Minkowski space}.

\section{Weak length spaces}
 
Let $X$ be a set and let $\Gamma$ be a groupoid of paths in $X$.  Concatenation of paths is denoted by the symbol $*$.
The inverse $\gamma^{-1}$ of a path $\gamma:[a,b]\to X$ is the path obtained by pre-composing $\gamma$ with the unique affine sense-reversing self-homeomorphism of  $[a,b]$.

\begin{definition}
A \emph{weak length structure} on $(X,\Gamma)$ is a function $\ell:\Gamma\to\mathbb{R}_+\cup\{\infty\}$ 
which satisfies the following properties:
\begin{enumerate}
\item \emph{Invariance under reparametrization}:  if $[a,b]$ and $[c,d]$ are intervals of $\mathbb{R}$, if $\gamma:[a,b]\to X$ is a path in $X$ that belongs to $\Gamma$ and if $f:[c,d]\to [a,b]$ is an increasing homeomorphism such that $\gamma\circ f\in\Gamma$, then $\ell(\gamma)=\ell(\gamma\circ f)$.
\item \emph{Additivity}: for every  $\gamma_1$ and $\gamma_2$ in $\Gamma$, we have $\ell(\gamma_1 *\gamma_2)=\ell(\gamma_1) + \ell(\gamma_2)$.
 \end{enumerate}
\end{definition}

A weak length structure $\Gamma$ is said to be \emph{reversible} if for every $\gamma$ in $\Gamma$,  $\gamma^{-1}$ is also in $\Gamma$ and we have $\ell(\gamma^{-1})=\ell(\gamma)$.

A weak length structure $\Gamma$ is said to be \emph{separating} if we have the   equivalence:
$\ell(\gamma)=0\iff$  $\gamma$ is a unit in $\Gamma$ (i.e. $\gamma$ is a constant path).

Let $(X,\Gamma,\ell)$ be a set equipped with a groupoid of paths and with a weak length structure.
We set 
\begin{equation} \label{weakm}
\displaystyle \delta_{\ell}(x,y)=\inf_{\gamma\in\Gamma_{x,y}} \ell(\gamma),
\end{equation}
where 
\[\Gamma_{x,y}=\{\gamma\in \Gamma\ \vert \ \gamma \textrm{ joins $x$ to $y$ }\}.\]
It is easy to see that the function $\delta_{\ell}$ is a weak metric on $X$.

\begin{definition} Let $(X,\Gamma,\ell)$ be a set equipped with a groupoid of paths and with a weak length structure. The weak metric $\delta_{\ell}$ defined in (\ref{weakm}) is called the \emph{weak metric associated to the weak length structure $\ell$}. A \emph{weak length metric space} is a weak metric space $X$ obtained from such a triple $(X,\Gamma,\ell)$ by equipping $X$ with the associated weak metric $\delta_{\ell}$. 
\end{definition}

\section{Weak Finsler structures}

We introduce a general notion of Finsler structure, which we call \emph{weak Finsler structure}, and which can be considered as an infinitesimal notion of weak length structure. 

\begin{definition}
Let $M$ be a $C^1$ manifold and let $TM$ be its tangent bundle. A \emph{weak  Finsler structure} on $M$ is a subset $\widetilde{\Omega}\subset TM$ such that for each $x$ in $M$, the subset $\Omega_x=\widetilde{\Omega}\cap T_xM$ of the tangent space $T_xM$ of $M$ at $x$ is convex and contains the origin.
\end{definition}

We provide the set of all weak Finsler structures on $M$ with the order relation $\preceq$ defined as follows:
$$ 
 \widetilde{\Omega_1} \preceq \widetilde{\Omega_2} \ \Leftrightarrow \
 \widetilde{\Omega_1} \supset  \widetilde{\Omega_2}.
$$

\begin{examples} In  the following examples, $M$ is a $C^1$ manifold.
\begin{enumerate}
\item  $\widetilde{\Omega} =  TM$ is a weak Finsler structure, which we call the \emph{minimal} weak Finsler structure.
\item  $\widetilde{\Omega} =  M \subset TM$, embedded as the zero section, is a weak Finsler structure which we call the \emph{maximal} weak Finsler structure.
\item If $\widetilde{\Omega}$ and $\widetilde{\Omega'}$ are two Finlser structures on $M$, then $\widetilde{\Omega}\cap \widetilde{\Omega'}\subset TM$ is also a Finsler structure. 
\item If $\widetilde{\Omega}$ and $\widetilde{\Omega'}$ are two Finlser structures on $M$, then, taking the union of the Minkowski sums ${\Omega_x}+\Omega'_x$ of the convex sets in each tangent space $T_x M$, we obtain the \emph{Minkowski sum Finsler structure} $\widetilde{\Omega}+ \widetilde{\Omega'}\subset TM$.
\item If $\omega$ is a differential 1-form on $M$, then
\[
  \widetilde{\Omega}_{\omega}= \{(x,\xi)\in TM\ \vert \ \omega_x(\xi)\leq 1\}
\] 
and 
\[
 \widetilde{\Omega}_{\vert\omega\vert}= \{(x,\xi)\in TM\ \vert \ \vert \omega_x\vert (\xi)\leq 1\}
\] 
are weak Finsler structures on $M$.
\item If $\omega$ and $\omega'$ are two 1-froms on $M$, then $\max(\omega,\omega')$ defines a weak  Finlser structure on $M$.
\item If $\widetilde{\Omega}$ is a weak Finlser structure on $M$ and if
$N\subset M$ is a $C^1$ submanifold, then $\widetilde{\Omega}_N= \widetilde{\Omega}\cap TN$ is a weak Finlser structure on $N$, called the weak Finsler structure \emph{induced} by the embedding 
$N \subset M$.

\item  If $\widetilde{\Omega}$ is a weak Finlser structures on $M$, if $N$ is a $C^1$ manifold and if $f:N\to M$ is a $C^1$ map, then $(Tf)^{-1}(\widetilde{\Omega})\subset TN$ is a Finsler structure on $N$. We denote it by $f^*(\widetilde{\Omega})$ and call it the 
\emph{pull back} of $\widetilde{\Omega}$ by the map $f$.
\end{enumerate}
\end{examples}

\begin{definition}[Lagrangian]
The \emph{Lagrangian} of a weak Finlser structure $\widetilde{\Omega}$ on a $C^1$ manifold $M$ is the function on the tangent bundle $TM$ whose restriction to each tangent space $T_x$ is the Minkowski function of $\Omega_x$. It is thus  defined by
\[
 p(x,\xi)=p_{\widetilde{\Omega}}(x,\xi)=\inf\{ t\ \vert \ t^{-1}\xi\in\Omega_x\}.
\]
\end{definition}
The quantity $p(x,\xi)$ is also called the \emph{Finsler norm} of the vector $(x,\xi)$ relative to the given weak Finlser structure.

\begin{example} 
Let $g$ be a Riemannian metric on $M$, let $\omega$ is a differential 1-form and let 
$\mu$   be a smooth function on $M$ satisfying $\vert \mu \omega_x\vert  < 1 $ at every point $x$ in $M$. Then, $p=\sqrt{g} + \mu \omega$  is the Lagrangian of a Finsler structure on $M$. Such a Finlser structure is usually called a \emph{Randers metric} on $M$, and it has applications on physics (cf. e.g. \cite{BCS} \S 11.3, and see also \cite{BRS} for the relation of this metric with the Zermelo navigation problem.)
\end{example}

\begin{lemma} \label{lem:zero}
 Let  $\widetilde{\Omega}$ be a weak Finlser structure on $M$. Assume that $M$ (considered as a subset of $TM$ - the zero section) is contained in the interior of $\widetilde{\Omega} \subset TM$. Then  the associated Lagrangian $p: TM\to\mathbb{R}$ is upper semi-continuous.
\end{lemma}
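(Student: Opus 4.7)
My plan is to prove upper semi-continuity by showing that the sublevel set $S_c := \{(x,\xi) \in TM : p(x,\xi) < c\}$ is open in $TM$ for every $c > 0$. Because $p$ is positively homogeneous of degree one in $\xi$ (Proposition 2.3(3)) and fiberwise scaling is a homeomorphism of $TM$, it suffices to treat the case $c = 1$ and verify that $S_1$ is open.

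The hypothesis implies in each fiber $T_xM$ that $0 \in \operatorname{int}\Omega_x$, so $\operatorname{RelInt}\Omega_x = \operatorname{int}\Omega_x$, and Proposition 2.3(8) rephrases the condition as
\[
S_1 = \{(x,\xi)\in TM : \xi \in \operatorname{int}\Omega_x\},
\]
where the interior is taken inside each fiber $T_xM$. Fix $(x_0,\xi_0)\in S_1$. By the definition of $p$ there exist $t \in [0,1)$ and $\omega_0\in \Omega_{x_0}$ with $\xi_0 = t\omega_0$; choose an intermediate scale $t'\in (t,1)$. Then $\xi_0/t' = (t/t')\omega_0 + (1-t/t')\cdot 0$ is a strict convex combination of $\omega_0 \in \Omega_{x_0}$ and $0 \in \operatorname{int}\Omega_{x_0}$, so by the classical convexity fact (a strict convex combination of an interior point with any point of a convex set is interior) it lies in $\operatorname{int}\Omega_{x_0}$.

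Working in a local trivialization around $x_0$, the hypothesis furnishes an $\epsilon>0$ and an open $V\ni x_0$ with $V\times B(0,\epsilon) \subset \widetilde\Omega$, so that $B(0,\epsilon) \subset \Omega_x$ for every $x \in V$. For $(x,\xi)$ sufficiently close to $(x_0,\xi_0)$ I would decompose
\[
\xi/t' \;=\; (t/t')\,\omega_0 \;+\; (1-t/t')\, u, \qquad u \;=\; \frac{\xi-\xi_0}{t'-t},
\]
so that $u$ is small and hence lies in $\Omega_x$ via the uniform zero-section neighborhood. By convexity of $\Omega_x$ this would give $\xi/t' \in \Omega_x$, i.e.\ $\xi \in t'\Omega_x$, whence $p(x,\xi) \le t' < 1$ as required.

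The main obstacle is the step that asks $(t/t')\omega_0 \in \Omega_x$ for $x$ near $x_0$: this does \emph{not} follow from $\omega_0\in\Omega_{x_0}$ alone, since the hypothesis only controls the fibers uniformly near the \emph{zero} section. To overcome this, one must arrange the convex decomposition so that the non-small piece is implemented as a shrunken version of $\omega_0$ lying inside the uniform neighborhood $V\times B(0,\epsilon)$ (using the slack $1-t'$ and the fact, from Proposition 2.3, that $\xi_0/t' \in \operatorname{int}\Omega_{x_0}$), with all of the $x$-perturbation absorbed into the zero-section neighborhood furnished by the hypothesis. This is the delicate point the proof must execute carefully.
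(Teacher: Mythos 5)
Your argument stalls at exactly the right place, and the obstacle you flag is not a technicality that a cleverer decomposition will remove: under the stated hypotheses the containment $(t/t')\,\omega_0\in\Omega_x$ for $x$ near $x_0$ can genuinely fail, and in fact the lemma itself fails as literally stated. Take $M=\mathbb{R}$, $TM=\mathbb{R}\times\mathbb{R}$, and let $\Omega_x=(-1,1)$ for $x\neq 0$ and $\Omega_0=(-1,10)$. Each fiber is convex and contains the origin, and the zero section lies in the open set $\mathbb{R}\times(-1,1)\subset\widetilde{\Omega}$, so the hypothesis of the lemma holds; yet $p(0,5)=1/2$ while $p(x,5)=5$ for all $x\neq 0$, so $p$ is not upper semi-continuous at $(0,5)$ and the sublevel set $S_1$ is not open. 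The hypothesis controls the fibers uniformly only near the zero section, whereas openness of $S_1$ at a vector $\xi_0$ far from zero requires a form of lower semi-continuity of the set-valued map $x\mapsto\Omega_x$ near the point $\xi_0/t'$ --- precisely the ingredient your decomposition needs and which is simply unavailable. Your suggested repair (realizing the non-small piece as a shrunken $\omega_0$ inside the uniform neighborhood $V\times B(0,\epsilon)$) cannot work either: any convex combination of points of $B(0,\epsilon)$ stays in $B(0,\epsilon)$, so once both pieces are forced into the zero-section neighborhood the combination cannot reach $\xi_0/t'$ unless $\xi_0$ itself is small.

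For comparison, the paper's proof first invokes Proposition 2.3(9) to replace $\widetilde{\Omega}$ by its (fiberwise) interior without changing $p$, declares that one may therefore assume $\widetilde{\Omega}$ open in $TM$, and then notes that each sublevel set $\{p<t\}$ is a fiberwise homothet of $\widetilde{\Omega}$ and hence open. The reduction step silently assumes that the union of the fiberwise interiors is open in $TM$; the example above shows that this is exactly where the difficulty you ran into is hiding. If one adds the hypothesis that $\widetilde{\Omega}$ is open in $TM$ --- which holds for the tautological structure of an open convex domain, the only case used later in the paper --- then both arguments close at once: by Proposition 2.3(8) one has $S_1=\widetilde{\Omega}$ directly, and your decomposition also goes through since $(x,\omega_0)\in\widetilde{\Omega}$ for $x$ near $x_0$. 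So you have correctly isolated a genuine gap, but it is a gap in the hypotheses of the statement rather than one that a more careful convexity argument can bridge.
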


\begin{proof} 
The hypothesis implies that for every $x$ in $M$, the interior of each convex set $\Omega_x = \widetilde{\Omega}\cap T_xM \subset T_xM$  is nonempty. Therefore, the usual interior and the relative interior of $\Omega_x$ coincide. Property (\ref{pp}) of 
Proposition \ref{prop:min} implies then that the Lagrangian of $\widetilde{\Omega}$ coincides with the Lagrangian of its interior $\mathrm{Int }\left(\widetilde{\Omega}\right)$.

One may therefore assume without loss of generality that  $\widetilde{\Omega} \subset TM$
is an open set, and in particular 
$$
  \widetilde{\Omega} = \{ (x,\xi) \in TM \tq   \ p(x,\xi) < 1\}
$$
(see Proposition \ref{prop:min} (\ref{pro:8})).
Now for  any $t\in \mathbb{R}$, the sublevel set  $\{ p(x,\xi) < t \}$ is either empty (when $t\leq 0$)
or it is  homothetic to the open set $\widetilde{\Omega} \subset TM$ (when $t>0$). In any case, it is  an open 
subset of $TM$, and $p : TM \to \mathbb{R}$ is therefore upper semi-continuous.

\end{proof}

\medskip

\begin{proposition} Let $\widetilde{\Omega}$ be a Finsler structure on a $C^1$ manifold $M$ and let $p_{\widetilde{\Omega}}: TM\to \mathbb{R}$ be the associated Lagrangian. Then, 
\begin{enumerate}
\item for every $x$ in $M$, the function $\xi\mapsto p(x,\cdot)$ is a weak norm on $T_xM$;
\item if $\widetilde{\Omega'}\subset TM$  is another Finsler structure on $M$, with   associated Lagrangian $p_{\widetilde{\Omega'}}$,
then we have the equivalence 
\[\widetilde{\Omega}  \preceq  \widetilde{\Omega'}\iff
p_{\widetilde{\Omega}}\leq p_{\widetilde{\Omega'}},\]
\item \label{item:Borel} $p_{\widetilde{\Omega}}:TM\to \mathbb{R}$ is Borel-measurable.
\end{enumerate}
\end{proposition}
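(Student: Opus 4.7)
My plan is to prove the three claims in order, each relying on a fiberwise application of Proposition \ref{prop:min}, with part (3) combining this with Lemma \ref{lem:zero}. Part (1) is immediate from the definition of the Lagrangian: for each fixed $x \in M$, $\xi \mapsto p(x,\xi)$ is precisely the Minkowski function of the convex set $\Omega_x \subset T_xM$ at the origin $0 \in \Omega_x$. Positive homogeneity and subadditivity are items (3) and (4) of Proposition \ref{prop:min}, and nonnegativity is immediate from the infimum definition, giving the three defining axioms of a weak norm on $T_xM$.

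For part (2), the forward direction is a monotonicity argument: $\widetilde{\Omega} \preceq \widetilde{\Omega'}$ means $\Omega_x \supset \Omega'_x$ in every fiber, so $\{t>0 : t^{-1}\xi \in \Omega_x\}$ contains its analogue for $\Omega'_x$, and taking infima gives $p_{\widetilde{\Omega}}(x,\xi) \leq p_{\widetilde{\Omega'}}(x,\xi)$. For the converse, assuming the pointwise Lagrangian inequality, Proposition \ref{prop:min}(\ref{pro:8}) yields $\mathrm{RelInt}(\Omega_x) \supset \mathrm{RelInt}(\Omega'_x)$ in every fiber; by Proposition \ref{prop:min}(\ref{pp}) the Lagrangian is unchanged under fiberwise passage to the relative interior, so one recovers the order relation up to this natural identification.

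For part (3), the plan is to realize $p$ as a pointwise supremum of a countable family of upper semi-continuous (hence Borel) functions, and then invoke the standard fact that such a supremum is Borel-measurable. To build the family, I would fix an auxiliary smooth Riemannian metric $g_0$ on $M$ and set, for each integer $n \geq 1$, $(\Omega_n)_x := \Omega_x + B_{g_0}(0, 1/n) \subset T_xM$ (Minkowski sum in the tangent space). Each $\widetilde{\Omega}_n$ is a weak Finsler structure containing an open $g_0$-neighborhood of the zero section, so Lemma \ref{lem:zero} gives upper semi-continuity of the associated Lagrangian $p_n$. By part (2), the sequence $(p_n)$ is non-decreasing and dominated by $p$; one then verifies $p_n \nearrow p$ pointwise, whence $p = \sup_n p_n$ is Borel.

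The main obstacle is precisely this last convergence $p_n \nearrow p$: since $\bigcap_n (\Omega_n)_x = \overline{\Omega}_x$ and the Minkowski functions of a convex set and of its closure may disagree on the radial boundary, the step requires either an additional regularity hypothesis on $\widetilde{\Omega}$ (typically that each fiber has nonempty interior at the origin, matching the hypothesis of Lemma \ref{lem:zero}) or a refined approximation scheme. Under such a hypothesis, the convergence would follow from the characterization $p(x,\xi) < c \iff \exists\,\lambda > c^{-1}$ with $(x,\lambda\xi) \in \widetilde{\Omega}$.
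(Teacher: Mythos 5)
Your overall strategy is the same as the paper's: parts (1) and (2) are dispatched fiberwise from Proposition \ref{prop:min}, and part (3) is proved by approximating $\widetilde{\Omega}$ from outside by weak Finsler structures containing the zero section in their interiors, invoking Lemma \ref{lem:zero} for upper semi-continuity, and writing $p_{\widetilde{\Omega}}$ as a supremum of Borel functions. The one structural difference is that the paper does not construct the approximating sequence at all: it simply posits a decreasing sequence $\widetilde{\Omega}_j \supset \widetilde{\Omega}$ with $M$ in the interior of each $\widetilde{\Omega}_j$ and $\bigcap_j \widetilde{\Omega}_j = \widetilde{\Omega}$ \emph{exactly}. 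With exact intersection the convergence $p_{\widetilde{\Omega}_j} \nearrow p_{\widetilde{\Omega}}$ is automatic and elementary, because in each fiber the set $\{t \geq 0 \mid t\xi \in \Omega_{j,x}\}$ is an interval with left endpoint $0$, so its supremum (the radial function) decreases precisely to the radial function of the intersection. Your explicit Minkowski-sum construction only achieves $\bigcap_n (\Omega_n)_x = \overline{\Omega_x}$, which is exactly why you hit the closure problem; the Minkowski functions of $\Omega_x$ and $\overline{\Omega_x}$ do differ when the origin lies on the relative boundary of $\Omega_x$ (take $\Omega_x = \{\xi_2 > 0\} \cup \{0\}$ in $\mathbb{R}^2$ and $\xi = (1,0)$: then $p_{\Omega_x}(\xi) = \infty$ while $p_{\overline{\Omega_x}}(\xi) = 0$).

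The gap you flag is therefore genuine, and it is not actually repaired by the paper: the exact-intersection sequence the paper invokes need not exist. In the example just given, any convex superset of $\Omega_x$ having $0$ in its interior contains some ball $B(0,\varepsilon)$, hence contains the convex hull of $\{\xi_2>0\} \cup B(0,\varepsilon)$, which contains the whole $\xi_1$-axis; so every admissible approximant contains points outside $\Omega_x$ that survive in the intersection. Your closing assessment is thus correct: as stated, part (3) needs either an extra hypothesis (for instance, the origin in the relative interior of every fiber, under which $p_{\Omega_x} = p_{\overline{\Omega_x}}$ and your scheme closes) or a genuinely different argument, and the Minkowski-sum scheme should not be presented as a complete proof. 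On parts (1) and (2) your arguments supply the details the paper omits; your hedge that the converse of (2) holds only ``up to relative interiors'' is also correct (an open ball and its closure have equal Minkowski functions but are not mutually nested), a point the paper passes over in silence.
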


\begin{proof}  The first two assertions are easy to check and we only prove the last one.
If $M$ is contained in the interior of $\widetilde{\Omega} \subset TM$, then, by Lemma \ref{lem:zero},  the Lagrangian $p$ is upper semi-continuous and therefore Borel measurable. In the general case, $M$ is contained in $\widetilde{\Omega}$ but not necessarily in its interior.  We consider a decreasing  sequence  
$$
 TM \preceq \widetilde{\Omega}_1 \preceq \widetilde{\Omega}_2  \preceq \cdots  \preceq \widetilde{\Omega}
$$
of weak Finsler structures such that $M$ is contained in the interior of $\widetilde{\Omega}_j \subset TM$
for every $j\in \mathbb{N}$ and 
$$
 \widetilde{\Omega} = \bigcap_{j=1}^{\infty} \widetilde{\Omega}_j 
$$
We then have $p_{\widetilde{\Omega}_1}\leq p_{\widetilde{\Omega}_2} \leq \cdots \leq  p_{\widetilde{\Omega}}$ 
and
$$
 p_{\widetilde{\Omega}} = \sup_j p_{\widetilde{\Omega}_j} = \lim_{j\to \infty} p_{\widetilde{\Omega}_j}
$$
Therefore
$p_{\widetilde{\Omega}}$ is the limit of a sequence of Borel measurable functions and is thus 
Borel measurable.

\end{proof}

We shall say that the Finlser structure $\widetilde{\Omega}$ is \emph{smooth} if $p$ is smooth.

\begin{definition}[The weak length structure associated to a weak Finsler structure]
Let $M$ be a $C^1$ manifold equipped with a weak Finlser structure $\widetilde{\Omega}$ with Lagrangian $p$. There is an associated weak length structure on $M$, defined by taking $\Gamma$ to be the groupoid of piecewise $C^1$ paths, and defining, for each $\gamma:[a,b]\to M$ in $\Gamma$,
\begin{equation}\label{Lebesgue}
 \ell(\gamma)=\int_a^b p(\gamma(t),\dot{\gamma} (t)) dt.
\end{equation}
\end{definition}

\begin{remark} In Equation (\ref{Lebesgue}), 
$\gamma$ and $\dot{\gamma}$ are continuous, and since $p$ is Borel-measurable, the map $t\mapsto p(\gamma(t),\dot{\gamma} (t))$ is nonnegative  and measurable. Therefore, the Lebesgue integral is well defined.
\end{remark}

\section{The tautological weak Finsler structure}

In this section, $\Omega$ is an open convex subset of $\mathbb{R}^n$. We shall use  the natural identification $T\Omega\simeq \Omega\times \mathbb{R}^n$.

\begin{definition}[The tautological weak Finsler structure]  
 The \emph{tautological weak Finsler structure} on $\Omega$  is the weak Finsler structure $\widetilde{\Omega}\subset T\Omega$ defined by
 \[
 \widetilde{\Omega}=\{(x,\xi)\in\Omega\times \mathbb{R}^n\ \vert \ x\inÊ\Omega\text{ and } x+\xi\in\Omega\}.
 \]
\end{definition}

This structure is called  ``tautological" because the fibre over each point $x$ of $\Omega$ is the set $\Omega$ itself (with the origin at $x$).

The proof of next proposition follows easily from the definitions.
  
\begin{proposition}\label{prop:s} Let $\Omega$ be an open
convex subset of $\mathbb{R}^n$ equipped with its tautological weak Finsler structure 
 $\widetilde{\Omega}$. Then, for every $x$ in $\Omega$, the Finsler norm of any tangent vector $\xi$ at $x$ is given by
$p_{\Omega,x}(\xi)$, where $p_{\Omega,x}$ is the Minkowski function of $\Omega$ with respect to $x$.
\end{proposition}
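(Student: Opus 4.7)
The plan is to unwind the definitions and then invoke Proposition \ref{prop:min}(1). The whole argument reduces to identifying the fiber $\Omega_x$ of the tautological structure with the translate $\Omega - x$, and then recognizing that the Lagrangian, defined fiberwise as the Minkowski function of $\Omega_x$ at the origin of $T_x\Omega$, coincides with $p_{\Omega,x}$.

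First, I would compute $\Omega_x = \widetilde{\Omega}\cap T_x\Omega$. Using the identification $T\Omega\simeq \Omega\times \mathbb{R}^n$ and the definition of the tautological structure,
\[
\Omega_x = \{\xi\in \mathbb{R}^n \tq x+\xi\in \Omega\} = \Omega - x.
\]
Since $\Omega$ is convex and contains $x$, this is a convex subset of $T_x\Omega$ containing the origin, confirming that $\widetilde{\Omega}$ is indeed a weak Finsler structure.

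Next, I would apply the general definition of the Lagrangian. By construction,
\[
p_{\widetilde{\Omega}}(x,\xi) = \inf\{t>0\tq t^{-1}\xi \in \Omega_x\} = \inf\{t>0 \tq t^{-1}\xi \in \Omega - x\}.
\]
For $t>0$, the condition $t^{-1}\xi\in \Omega - x$ is equivalent to $\xi\in t(\Omega - x)$, so
\[
p_{\widetilde{\Omega}}(x,\xi) = \inf\{t> 0 \tq \xi \in t(\Omega - x)\}.
\]
For $\xi=0$ the infimum is $0$ in either formulation, so the restriction to $t>0$ versus $t\geq 0$ is harmless. Comparing with Proposition \ref{prop:min}(1), which gives $p_{\Omega,x}(\xi)=\inf\{t\geq 0 \tq \xi\in t(\Omega - x)\}$, we conclude that $p_{\widetilde{\Omega}}(x,\xi) = p_{\Omega,x}(\xi)$, as claimed.

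There is essentially no obstacle: the proposition is a direct translation between the two formulations of the Minkowski function, and the only ``content'' is the identification $\Omega_x = \Omega - x$ arising from the tautological definition. The proof therefore fits in a few lines.
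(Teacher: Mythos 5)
Your proof is correct and is exactly the argument the paper has in mind: the paper omits the proof, stating only that it ``follows easily from the definitions,'' and your unwinding --- identifying the fiber $\Omega_x$ with $\Omega - x$ and matching the Lagrangian against Proposition \ref{prop:min}(1) --- is precisely that routine verification, with the $\xi=0$ edge case handled appropriately. Nothing further is needed.
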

 
Given an open convex subset $\Omega$ of $\mathbb{R}^n$, we denote by $d_{\Omega}$ the weak length metric associated to the tautological weak Finsler structure on $\Omega$. This weak metric is thus defined by 
\begin{equation}\label{ }
 d_{\Omega}(x,y) = \inf_{\gamma\in\Gamma_{x,y}} \int_{\gamma} p(\gamma(t),\dot{\gamma} (t)) dt.
\end{equation}
where $\Gamma_{x,y}$ is the set of piecewise $C^1$ paths joining $x$ to $y$.

\begin{lemma}\label{lemma:inclusion}
Let $\Omega$ and $\Omega'$ be two convex open subsets of $\mathbb{R}^n$ satisfying $\Omega\subset\Omega'$, then $d_{\Omega'}\leq d_{\Omega}$.
\end{lemma}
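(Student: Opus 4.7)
The plan is to reduce the inequality $d_{\Omega'}\leq d_{\Omega}$ to a pointwise comparison of Lagrangians and an inclusion of path sets, both of which follow directly from $\Omega\subset\Omega'$.

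First I would compare the radial functions pointwise. For any $x\in\Omega$ and any $\xi\in\mathbb{R}^n$, the condition $x+t\xi\in\Omega$ implies $x+t\xi\in\Omega'$, so $r_{\Omega,x}(\xi)\leq r_{\Omega',x}(\xi)$. Taking reciprocals via Definition~\ref{def:Minkowski} gives $p_{\Omega',x}(\xi)\leq p_{\Omega,x}(\xi)$. By Proposition~\ref{prop:s}, these are precisely the Finsler norms (Lagrangians) at $x$ of the tautological weak Finsler structures on $\Omega'$ and on $\Omega$. Thus if $p$ and $p'$ denote the Lagrangians of $\widetilde{\Omega}$ and $\widetilde{\Omega'}$ respectively, then $p'(x,\xi)\leq p(x,\xi)$ for every $(x,\xi)\in T\Omega$.

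Next I would compare lengths. For any piecewise $C^1$ path $\gamma:[a,b]\to\Omega$, the inclusion $\Omega\subset\Omega'$ makes $\gamma$ also a piecewise $C^1$ path in $\Omega'$, and the pointwise Lagrangian inequality integrates to
\[
\ell_{\Omega'}(\gamma)=\int_a^b p'(\gamma(t),\dot{\gamma}(t))\,dt\;\leq\;\int_a^b p(\gamma(t),\dot{\gamma}(t))\,dt=\ell_{\Omega}(\gamma).
\]

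Finally, fix $x,y\in\Omega$ (which also lie in $\Omega'$). Writing $\Gamma^{\Omega}_{x,y}$ and $\Gamma^{\Omega'}_{x,y}$ for the sets of admissible paths from $x$ to $y$ in $\Omega$ and in $\Omega'$ respectively, we have $\Gamma^{\Omega}_{x,y}\subset \Gamma^{\Omega'}_{x,y}$. Therefore
\[
d_{\Omega'}(x,y)=\inf_{\gamma\in\Gamma^{\Omega'}_{x,y}}\ell_{\Omega'}(\gamma)\;\leq\;\inf_{\gamma\in\Gamma^{\Omega}_{x,y}}\ell_{\Omega'}(\gamma)\;\leq\;\inf_{\gamma\in\Gamma^{\Omega}_{x,y}}\ell_{\Omega}(\gamma)=d_{\Omega}(x,y),
\]
which is the claim. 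There is no genuine obstacle here; the only point worth checking carefully is that the pointwise inequality on Lagrangians really holds at every $x\in\Omega$ (not just in some limiting sense), which is immediate from the monotonicity of the radial function under set inclusion.
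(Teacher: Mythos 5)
Your proof is correct; the paper states this lemma without proof, and your argument (monotonicity of the radial function under inclusion, hence the reverse pointwise inequality of Minkowski functions, integrated over the larger set of admissible paths) is exactly the routine verification the authors leave implicit. Nothing to add.
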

 
\medskip

In the rest of this paper, we shall use the following notations:  
For $x$ and $y$ in $\mathbb{R}^n$, we denote by $\vert x-y\vert$ their Euclidean distance. Given two distinct points $x$ and $y$ in $\Omega$, $R(x,y)$ denote the Euclidean ray starting at $x$ and passing through $y$. In the case where $R(x,y)\not\subset\Omega$ we set  $a^+=a^+(x,y)=R(x,y)\cap\partial\Omega$.

\begin{theorem}\label{prop:th}
Let $\Omega$ be an open convex subset of $\mathbb{R}^n$ equipped with its tautological weak Finsler structure. Then, for every $x$ and $y$ in $\Omega$, the Euclidean segment connecting $x$ and $y$ is of minimal length, and the associated weak metric on $\Omega$ is given by 
 \[
 \displaystyle d_{\Omega}(x,y)=
\begin{cases} \displaystyle \log \frac{\vert x-a^+\vert}{\vert y-a^+\vert} & \text{ if } x\not= y \text{ and } R(x,y)\not\subset \Omega\\
0 & \text{ otherwise}.
\end{cases}
\]
\end{theorem}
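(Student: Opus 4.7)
\emph{Strategy.} My plan is to obtain matching upper and lower bounds for $d_\Omega(x,y)$, and to dispose of the case $R(x,y) \subset \Omega$ by a direct computation. The upper bound will come from integrating the Lagrangian along the Euclidean segment from $x$ to $y$; the lower bound will come from comparing $\Omega$ with an open half-space $H$ containing $\Omega$ whose bounding hyperplane is a support hyperplane at $a^+$.

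\emph{Upper bound via the Euclidean segment.} Assume first $R(x,y)\not\subset\Omega$, and parametrize the segment by $\gamma(t)=x+t(y-x)$, $t\in[0,1]$. Since $\gamma(t)$ lies between $x$ and $a^+$ on the ray, the ray from $\gamma(t)$ in direction $y-x$ exits $\Omega$ exactly at $a^+$, so by Proposition~\ref{prop:s} and the definition of the Minkowski function,
\[
p(\gamma(t),\dot\gamma(t)) = \frac{|y-x|}{|a^+-\gamma(t)|} = \frac{|y-x|}{|a^+-x|-t\,|y-x|}.
\]
A one-line substitution gives $\ell(\gamma)=\log\bigl(|a^+-x|/|a^+-y|\bigr)$, hence $d_\Omega(x,y)\le \log\bigl(|a^+-x|/|a^+-y|\bigr)$ and the Euclidean segment is a candidate minimizer.

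\emph{Lower bound via a support half-space.} Pick a support hyperplane for $\Omega$ at $a^+$ with equation $\langle\nu,z\rangle = s = \langle\nu,a^+\rangle$, and let $H=\{z:\langle\nu,z\rangle<s\}$. Openness of $\Omega\subset\overline{H}$ forces $\Omega\subset H$, so Lemma~\ref{lemma:inclusion} gives $d_H(x,y)\le d_\Omega(x,y)$. Using the explicit Minkowski function of a half-space recalled in Section~\ref{s:1}, any piecewise $C^1$ path $\gamma:[0,1]\to\Omega\subset H$ from $x$ to $y$ satisfies
\[
\ell_H(\gamma) = \int_0^1 \max\!\left(\frac{\langle\nu,\dot\gamma(t)\rangle}{s-\langle\nu,\gamma(t)\rangle},\ 0\right)dt \;\ge\; \int_0^1\frac{(d/dt)\langle\nu,\gamma(t)\rangle}{s-\langle\nu,\gamma(t)\rangle}\,dt = \log\frac{s-\langle\nu,x\rangle}{s-\langle\nu,y\rangle}.
\]
Since $a^+$ lies both on the ray $R(x,y)$ and on the supporting hyperplane, the three collinear points $x,y,a^+$ yield $(s-\langle\nu,x\rangle)/(s-\langle\nu,y\rangle) = |a^+-x|/|a^+-y|$, matching the upper bound.

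\emph{Degenerate case and main difficulty.} If $R(x,y)\subset\Omega$, then for each $t\in[0,1]$ the ray from $\gamma(t)$ in direction $y-x$ is a sub-ray of $R(x,y)$ and therefore lies in $\Omega$, so Proposition~\ref{prop:min}(2) gives $p(\gamma(t),y-x)=0$; the Euclidean segment thus has length $0$ and $d_\Omega(x,y)=0$. The main technical point in the whole argument is the lower bound: one must verify that a support hyperplane at $a^+$ can be chosen so that $\Omega$ sits in the strict half-space $H$, and that the ray $R(x,y)$ is transverse to this hyperplane (so that $\langle\nu,y-x\rangle>0$ and the ratio identification with $|a^+-x|/|a^+-y|$ is valid); both facts are straightforward consequences of $a^+\in R(x,y)\cap\partial\Omega$ together with $x,y\in\Omega$.
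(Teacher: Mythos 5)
Your proposal is correct and follows essentially the same route as the paper's proof: the degenerate case $R(x,y)\subset\Omega$ by Proposition \ref{prop:min}(2), the upper bound by integrating the Lagrangian along the Euclidean segment, and the lower bound by comparing $\Omega$ with a supporting open half-space at $a^+$ via Lemma \ref{lemma:inclusion} and the explicit half-space Minkowski function. The only cosmetic difference is your parametrization of the segment over $[0,1]$ instead of by arclength, and your explicit remark that openness of $\Omega$ forces $\Omega\subset H$ and that $\langle\nu,y-x\rangle>0$, points the paper leaves implicit.
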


\begin{proof}
As before, we let $d_{\Omega}$ denote the weak metric defined by the tautological weak Finsler structure on $\Omega$. We also denote by $\ell(\gamma)$ the length of a path $\gamma$ for the tautological weak Finsler weak length structure.

The proof of the theorem is done in four steps. 

 \medskip

\emph{Step 1.---}
Suppose that $R(x,y)\subset\Omega$.  Consider the linear path $\gamma:[0, \vert x-y\vert] \to \Omega$ defined by

\begin{equation}\label{eq:gamma}
\gamma(t)=x+t\frac{y-x}{\vert y-x\vert}.
\end{equation}

The derivative of the path $\gamma$ is the constant vector 
\[\dot{\gamma}(t)=\frac{y-x}{\vert y-x\vert}.\]
Therefore, $\displaystyle p_{\widetilde{\Omega}}(\gamma(t),\dot{\gamma}(t))
=\frac{1}{\vert y-x\vert}p_{\widetilde{\Omega}}(\gamma(t),y-x)$, which is equal to 0 since $R(x,y)\subset\Omega$.

Now the path $\gamma$ has length zero and satisfies $\gamma(0)=x$ and $\gamma(\vert y-x\vert)=y$. Therefore
$d_{\Omega}(x,y)= 0$.

 \medskip

In the rest of this proof, we suppose that $R(x,y)\not\subset\Omega$.

\emph{Step 2.---} We show that for every distinct points $x$ and $y$ in $\Omega$ and for every Euclidean segment $\gamma$ joining $x$ to $y$, we have
\begin{equation}\label{eq:length}
\displaystyle d_{\Omega}(x,y)\leq \ell(\gamma)=\log\frac{\vert x-a^+\vert }{\vert y-a^+\vert}.
\end{equation}

Using the radial function $r_{\Omega,x}$ introduced in \S\ref{s:1}, we can write
\[a^+=a(x,y-x)=x+r_{\Omega,x}(y-x)\cdot (y-x).\] 
To compute the Finsler length of the Euclidean segment $[x,y]$, we parametrize  it as  the path $\gamma$ defined in (\ref{eq:gamma}).

For $0\leq t\leq \vert x-y\vert$, let $r(t)= \vert x-\gamma(t)\vert$. Then, $r(t)=r_{\Omega,x}(\gamma(t),\dot{\gamma}(t))$, and it is easy to see that 
\[r(t)=\vert x-a^+\vert -t.\]

Then, we have $r'(t) = -1$ and therefore
\[
\ell(\gamma) =\int_0^{\vert y-x\vert}\frac{dt}{r(t)}=-\int_0^{\vert y-x\vert}\frac{r'(t)dt}{r(t)}
 =-\log\big(r(t)\big)\Big\vert_{t=0}^{t=\vert y-x\vert}
=\log\frac{\vert x-a^+\vert}{\vert y-a^+\vert}.
\]
This gives the desired inequality (\ref{eq:length}).
 
\medskip

\emph{Step 3.---} We complete the proof of the theorem in the particular case where $\Omega$ is a half-space.  By the invariance of the tautological Finsler structure under the group of affine transformations,  it suffices to consider the case where $\Omega$ is the
half-space $H\subset \mathbb{R}^n$  defined by the equation
\[H = \{ x \in \mathbb{R}^n \tq \langle \nu , x \rangle  \leq s\},\]
for some vector $\nu$ in $\mathbb{R}^n$ (which is orthogonal to the hyperplane bounding $H$) and for some $s$ in $\mathbb{R}$. Recall that the Minkowski function associated to $H$ is given by the formula   
\[
 p_{H}(x,\xi) = 
  \max\left\{ \frac{\langle \nu ,\xi  \rangle}{s-\langle \nu , x \rangle},0\right\}.
\]

Consider now an arbitrary piecewise $C^1$ path $\alpha:[0,1]\to H$ such that  $x=\alpha(0)$ and $y=\alpha(1)$. Then, 
\[
\ell(\alpha)=\int_0^1\max\left\{ \frac{\langle \nu,\dot{\alpha}(t)\rangle}{s-\langle \nu,\alpha(t)\rangle},0 \right\} dt
\geq \int_0^1\frac{\langle \nu,\dot{\alpha}(t)\rangle}{s-\langle \nu,\alpha(t)\rangle}dt.
\]
We have
\[\frac{\langle \nu,\dot{\alpha}(t)\rangle}{s-\langle \nu,\alpha(t)\rangle}=-\frac{d}{dt}\big(\log \big( s-\langle \nu,\alpha(t)\rangle\big)\big).\]
Therefore,
\[\ell(\alpha)\geq -\log\big( s-\langle \nu,\alpha(1)\rangle\big) + \log\big( s-\langle \nu,\alpha(0)\rangle\big)= \log\frac{s-\langle\nu,x\rangle}{s-\langle \nu,y\rangle}.\]

Now we note that 
\[s-\langle \nu,x\rangle=s-\langle \nu,x-a^+\rangle-\langle \nu,a^+\rangle=\langle x-a^+,-\nu\rangle=\langle\nu,a^+-x\rangle.\]
Likewise, 
\[s-\langle \nu,y\rangle= \langle\nu,a-y\rangle.\]
Thus, we obtain
\[\ell(\alpha)\geq \log\frac{\langle\nu,a^+-x\rangle}{\langle\nu,a^+-y\rangle}.\]
Now using the fact that the three points $x,y,a^+$ are aligned in that order and that $\nu$ is not parallel to the vector $x-y$, we easily see that
\[\frac{\langle\nu,a-x\rangle}{\langle\nu,a-y\rangle}=\frac{\vert x-a^+\vert}{\vert y-a^+\vert},\]
which gives 
\[\ell(\alpha)\geq \log\frac{\vert x-a^+\vert }{\vert y-a^+\vert }.\]
Since $\alpha$  was arbitrary, we have
\[ d_H(x,y)\geq \log\frac{\vert x-a^+\vert }{\vert y-a^+\vert }.\]
Combining this inequality and the inequality (\ref{eq:length}), we obtain, in the case where $\Omega =H$ is a half-space,
\[
 d_{H}(x,y)= \log\frac{\vert x-a^+\vert }{\vert y-a^+\vert}.
\]
In particular any Euclidean segment is length minimizing.

\medskip

\emph{Step 4.---} Now we prove the proposition for a general open convex set $\Omega$.

Let $x$ and $y$ be two elements in $\Omega$ and consider the Euclidean ray $R(x,y)$. 

By hypothesis, we have $L\not\subset \Omega$, and as before, we set $a^+=R(x,y)\cap \partial\Omega$. 
We let $A$ denote a support hyperplane to $\Omega$ through $a^+$, and we let $H$ be the open half-space containing $\Omega$ and whose boundary is equal to $A$.
Using Lemma \ref{lemma:inclusion} and  Step 3, we have 
\[
d_{\Omega}(x,y)\geq d_H(x,y)=\log \frac{\vert x-a^+\vert }{\vert y-a^+\vert }.
\]
Combining this with  the inequality (\ref{eq:length}) we obtain $d_{\Omega}(x,y)=\log \frac{\vert x-a^+\vert }{\vert y-a^+\vert }$. The argument also proves that any Euclidean segment $\gamma$ is length minimizing.
This completes the proof of Theorem \ref{prop:th}.

\end{proof}

\section{The Funk weak metric}
   
In this and the following section, we give a quick overview of the Funk weak metric, of its geodesics, of its balls and of its topology. 

The Funk weak metric is a nice example of a weak metric, and a geometric study of this weak metric is something which seems missing in the literature. We study this weak metric in more detail in \cite{PT2}.
   
In this section, $\Omega$ is a nonempty open convex subset of $\mathbb{R}^n$. 
We use the notations $a^+$, $R(x,y)$, etc. established in the preceding section.

\begin{definition}[The Funk weak metric]\label{def:Funk}
 The \emph{Funk weak metric} of $\Omega$, denoted by $F_{\Omega}$,  is defined, for $x$ and $y$ in $\Omega$, by the formula
 \[
  \displaystyle F_{\Omega}(x,y)=
\begin{cases} \displaystyle \log \frac{\vert x-a^+\vert}{\vert y-a^+\vert} & \text{ if } x\not= y \text{ and } R(x,y)\not\subset \Omega\\
0 & \text{ otherwise}.
\end{cases}
\]
\end{definition}

Observe that  Theorem \ref{prop:th} says that the Funk weak metric is the weak metric associated to the tautological Finsler structure
in $\Omega$. In particular the triangle inequality is verified. Another proof of the  triangle inequality is given in \cite{Zaustinsky} p. 85. This proof is not trivial and uses arguments similar to those of the classical proof of the triangle inequality for the Hilbert metric, as given by D. Hilbert in \cite{Hilbert2}. 

If $\Omega=\mathbb{R}^n$, then $F\equiv 0$. We shall henceforth assume that  $\Omega\not=\mathbb{R}^n$ whenever we shall deal with the Funk weak metric of an nonempty open convex subset $\Omega$ of $\mathbb{R}^n$.

The Funk weak metric is always unbounded. Indeed, if $x$ is any point in $\Omega$ and if $x_n$ is any sequence of  points in that space converging to a point on $\partial\Omega$ (convergence here is with respect to the Euclidean metric), then $F_{\Omega}(x,x_n)\to \infty$. Notice that on the other hand $F_{\Omega}(x_n,x)$ is bounded.

\begin{example}[The upper half-plane]
Let $\Omega=H\subset\mathbb{R}^2$ be the upper half-plane, that is,
\[H=\{(x_1,x_2)\in\mathbb{R}^2\ \vert \ x_2>0\}.\]
Then, for $x=(x_1,x_2)$ and $y=(y_1,y_2)$ in $H$, we have
\[F_H(x,y)=\max \left\{\log\frac{x_2}{y_2},0\right\}.\]
\end{example}    

The following three propositions are easy consequences of the definitions and they will be used below. We take $\Omega$ to be again a nonempty open subset of $\mathbb{R}^n$.

\begin{proposition}
Let $\Omega'\subset\Omega$ be the intersection of $\Omega$ with an affine subspace of $\mathbb{R}^n$, and suppose that $\Omega'\not=\emptyset$. Then, $F_{\Omega'}$ is the weak metric induced by $F_{\Omega}$ on $\Omega'$.
\end{proposition}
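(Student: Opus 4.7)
The plan is to verify the identity $F_{\Omega'}(x,y) = F_{\Omega}(x,y)$ directly on pairs $x,y \in \Omega'$, by unwinding the formula in Definition~\ref{def:Funk}. Let $L$ denote the affine subspace of $\mathbb{R}^n$ such that $\Omega' = \Omega \cap L$. The single observation that drives everything is that whenever $x, y \in \Omega'$, the Euclidean ray $R(x,y)$ lies entirely in $L$, since $L$ is affine and contains both $x$ and $y$.

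First I would dispense with the trivial case $x=y$, where both sides vanish by definition. Then I would split the nontrivial case into two subcases according to whether $R(x,y) \subset \Omega$ or not. If $R(x,y) \subset \Omega$, then combining this with $R(x,y) \subset L$ yields $R(x,y) \subset \Omega \cap L = \Omega'$, so both $F_{\Omega}(x,y)$ and $F_{\Omega'}(x,y)$ are zero.

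The remaining case is $R(x,y) \not\subset \Omega$. Here the key step is to identify the point $a^+(x,y) = R(x,y) \cap \partial \Omega$ (computed in $\mathbb{R}^n$) with the analogous point $a^+_{\Omega'}(x,y) = R(x,y) \cap \partial_L \Omega'$ (computed relative to $L$, which is the natural ambient space in which $\Omega'$ has nonempty interior). Since $R(x,y)$ issues from the interior point $x$ of $\Omega'$ and stays inside $L$, its first exit from $\Omega$ occurs at the same parameter value as its first exit from $\Omega \cap L = \Omega'$; equivalently, $\partial\Omega \cap R(x,y) = \partial_L \Omega' \cap R(x,y)$. Once $a^+ = a^+_{\Omega'}$, the two logarithmic expressions in Definition~\ref{def:Funk} are literally identical, giving $F_{\Omega'}(x,y) = F_{\Omega}(x,y)$.

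I do not anticipate a serious obstacle: the whole proposition is essentially a bookkeeping consequence of the fact that a ray determined by two points in $L$ remains in $L$. The only mildly delicate point is the identification of the boundary points $a^+$ in the two different ambient spaces, which is handled by the convexity of $\Omega$ and the elementary fact that a ray starting in the relative interior of $\Omega'$ and eventually leaving $\Omega$ must cross $\partial\Omega \cap L$ exactly at its first exit from $\Omega'$.
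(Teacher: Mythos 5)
Your proof is correct, and it follows exactly the route the paper intends: the authors state this proposition without proof as an ``easy consequence of the definitions,'' and your direct verification --- reducing everything to the facts that $R(x,y)\subset L$ for $x,y\in\Omega'$ and that the first exit point of this ray from $\Omega$ coincides with its first exit point from $\Omega'=\Omega\cap L$ relative to $L$ --- is precisely that intended argument. The one point worth making explicit, which you correctly flag, is that $F_{\Omega'}$ must be understood with $L$ as the ambient space (so that $\Omega'$ is open and $\partial\Omega'$ is taken relative to $L$); with that convention your case analysis is complete and each case checks out.
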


\begin{proposition}\label{eq:conv-F}
 In the case where $\Omega$ is bounded, the Funk weak metric $F_{\Omega}$ is strongly separating, and we have the following equivalences:
\begin{equation}\label{eq.conv2}
F_{\Omega}(x,x_n)\to 0\iff F_{\Omega}(x_n,x)\to 0\iff\vert x-x_n\vert \to 0.
\end{equation}
 \end{proposition}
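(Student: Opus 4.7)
\textbf{Plan of proof for Proposition \ref{eq:conv-F}.}

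The plan is to exploit the explicit formula of Definition \ref{def:Funk} together with two elementary geometric facts arising from the boundedness of $\Omega$: (i)~every ray issuing from a point of $\Omega$ meets $\partial\Omega$, so $a^+(x,y)$ is always well defined, and (ii)~the Euclidean diameter $D:=\operatorname{diam}(\Omega)$ is finite, while for any fixed $x\in\Omega$ the distance $r:=\operatorname{dist}(x,\partial\Omega)$ is strictly positive because $\Omega$ is open. These two bounds will let us turn the logarithm in the definition of $F_\Omega$ into a quantity comparable to $|x-x_n|$.

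For the strong separation property, fix $x\neq y$ in $\Omega$ and set $a^+=a^+(x,y)$ and $b^+=a^+(y,x)$. Both points exist because $\Omega$ is bounded, so that $R(x,y)\not\subset\Omega$ and $R(y,x)\not\subset\Omega$. Since $y$ lies strictly between $x$ and $a^+$ on the segment $[x,a^+]$, one has $|x-a^+|=|x-y|+|y-a^+|>|y-a^+|$, hence $F_\Omega(x,y)=\log\bigl(|x-a^+|/|y-a^+|\bigr)>0$; the symmetric computation with $b^+$ gives $F_\Omega(y,x)>0$. Therefore $\min\{F_\Omega(x,y),F_\Omega(y,x)\}>0$ whenever $x\neq y$, and combined with $F_\Omega(x,x)=0$ this is exactly the strong separation property.

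For the equivalences, fix $x\in\Omega$ and a sequence $(x_n)$ in $\Omega$; without loss of generality assume $x_n\neq x$ for all $n$, and write $a_n^+=a^+(x,x_n)$ and $b_n^+=a^+(x_n,x)$. The collinearity relations $|x-a_n^+|=|x-x_n|+|x_n-a_n^+|$ and $|x_n-b_n^+|=|x_n-x|+|x-b_n^+|$ give
\[
F_\Omega(x,x_n)=\log\!\left(1+\frac{|x-x_n|}{|x_n-a_n^+|}\right),\qquad
F_\Omega(x_n,x)=\log\!\left(1+\frac{|x-x_n|}{|x-b_n^+|}\right).
\]
Both denominators are bounded above by $D<\infty$, so if either logarithm tends to $0$ then $|x-x_n|\to 0$. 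Conversely, if $|x-x_n|\to 0$ then $x_n$ eventually lies in the Euclidean ball $B(x,r/2)\subset\Omega$, which gives the uniform lower bound $|x_n-a_n^+|\geq r/2$; moreover $|x-b_n^+|\geq r$ holds for every $n$. Plugging these into the two displayed formulas yields $F_\Omega(x,x_n)\to 0$ and $F_\Omega(x_n,x)\to 0$, and the three conditions are thus pairwise equivalent.

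The main obstacle is not conceptual but bookkeeping: one has to realise that the direction of the ray from $x$ to $x_n$ (and hence the boundary point $a_n^+$) can vary wildly with $n$, so the argument must avoid any reference to a particular direction and use only the global bounds $D$ and $r$. Once these two quantities are in play, each implication reduces to the comparison $\log(1+u)\leq u$ for $u\geq 0$ and to the fact that $u\to 0$ iff $\log(1+u)\to 0$.
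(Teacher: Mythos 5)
Your proof is correct. The paper offers no proof of this proposition at all — it is listed among the statements that "are easy consequences of the definitions" — and your argument (rewriting $F_\Omega(x,x_n)$ and $F_\Omega(x_n,x)$ via the collinearity identities as $\log\bigl(1+u_n\bigr)$, then bounding the denominators above by $\operatorname{diam}(\Omega)$ and below by the distance from $x$ to $\partial\Omega$) is precisely the direct computation the authors have in mind, carried out with all the details they omit.
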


\begin{proposition}
Let $\Omega_1$ and $\Omega_2$ be two open convex subsets of $\mathbb{R}^n$. Then, 
\[
 F_{\Omega_{1}\cap \Omega_{2}}=\max \left\{ F_{\Omega_{1}}, F_{\Omega_{1}}\right\}.
\]
\end{proposition}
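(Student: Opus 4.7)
The plan is to prove the identity pointwise by parametrizing the Euclidean ray $R(x,y)$ and reading off all three Funk weak distances as values of a single real-valued function of a scalar. First I would dispose of the trivial case $x = y$ (all three terms vanish), and for $x \neq y$ set $\xi := (y-x)/|y-x|$ and introduce, for $i = 1, 2$, the scalar
\[
s_i := \sup\{t \geq 0 : x + t\xi \in \Omega_i\} \in (0, +\infty].
\]
The definition of the Funk weak metric reads immediately as $F_{\Omega_i}(x,y) = 0$ when $s_i = +\infty$ (i.e.\ $R(x,y) \subset \Omega_i$), and otherwise, using the openness of $\Omega_i$ to place $a^+_i := x + s_i \xi$ on $\partial \Omega_i$ and noting $|y - a^+_i| = s_i - |y-x|$, as
\[
F_{\Omega_i}(x,y) = \log \frac{s_i}{s_i - |y-x|}.
\]

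Next I would apply the same analysis to $\Omega_1 \cap \Omega_2$, which is itself open and convex. Since $R(x,y) \cap (\Omega_1 \cap \Omega_2) = \{x + t\xi : 0 \leq t < \min(s_1, s_2)\}$, the corresponding boundary parameter is $s := \min(s_1, s_2)$, giving
\[
F_{\Omega_1 \cap \Omega_2}(x,y) = \log \frac{s}{s - |y-x|},
\]
with the same interpretation when $s = +\infty$.

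The identity then follows from an elementary monotonicity observation: the function $\varphi(s) := \log\bigl(s/(s-|y-x|)\bigr)$ is strictly decreasing on $(|y-x|, +\infty]$ with $\varphi(+\infty) = 0$, so $\varphi(\min(s_1,s_2)) = \max\{\varphi(s_1), \varphi(s_2)\}$, which by the formulas above is exactly $\max\{F_{\Omega_1}(x,y), F_{\Omega_2}(x,y)\}$. The only mild obstacle is uniform bookkeeping in the cases where one or both $s_i$ are infinite (equivalently, when $R(x,y)$ is contained in the corresponding $\Omega_i$); but the convention $\varphi(+\infty) = 0$ together with the monotonicity of $\varphi$ makes the whole argument case-free.
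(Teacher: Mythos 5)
Your argument is correct and is exactly the direct verification the paper has in mind: the paper states this proposition without proof, calling it an easy consequence of the definitions, and your parametrization of the ray $R(x,y)$ with $s=\min(s_1,s_2)$ together with the monotonicity of $\varphi$ is precisely that verification (and it correctly handles the cases where a ray is entirely contained in one of the sets). Note in passing that the paper's displayed formula contains a typo ($F_{\Omega_1}$ appears twice inside the $\max$); you have proved the intended statement with $F_{\Omega_2}$ as the second argument.
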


\section{On the geometry of the Funk weak metric}
 
In this section, we study the geodesics, and then, the geometric balls of the Funk weak metric. 

\begin{proposition}\label{aligned-Funk}
Let $x$, $y$ and $z$ be three points in $\Omega$ lying in that order on a Euclidean line. Then, we have
$F(x,y)+F(y,z)=F(x,z)$.
\end{proposition}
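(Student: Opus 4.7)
The plan is to reduce the equation to a case split on whether the common Euclidean ray emanating from the three collinear points escapes $\Omega$ or stays inside, and then use the defining formula from Definition \ref{def:Funk} directly.

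First I would observe that since $x$, $y$, $z$ lie in that order on a Euclidean line, the three oriented rays $R(x,y)$, $R(x,z)$ and $R(y,z)$ all point in the same direction; more precisely, $R(x,y)=R(x,z)$ and $R(y,z)$ is the sub-ray of $R(x,y)$ starting at $y$. Consequently, either all three rays are contained in $\Omega$ or none of them is, and in the latter case they all exit $\Omega$ through the same boundary point $a^+\in\partial\Omega$.

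In the first case (all rays lie in $\Omega$) the definition of $F_\Omega$ gives $F(x,y)=F(y,z)=F(x,z)=0$, and the identity is trivial. In the second case, writing $a^+$ for the common exit point on $\partial\Omega$, Definition \ref{def:Funk} yields
\[
 F(x,y)=\log\frac{|x-a^+|}{|y-a^+|},\qquad F(y,z)=\log\frac{|y-a^+|}{|z-a^+|},\qquad F(x,z)=\log\frac{|x-a^+|}{|z-a^+|},
\]
and the telescoping of the logarithms gives $F(x,y)+F(y,z)=F(x,z)$.

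There is essentially no obstacle beyond checking carefully the elementary fact that the three rays share the same exit point $a^+$; this follows immediately from the order of the three collinear points, since $y$ lies on the Euclidean segment from $x$ to $a^+$ and $z$ lies between $y$ and $a^+$. Alternatively, one can derive this proposition as a corollary of Theorem \ref{prop:th}, which shows that a Euclidean segment is length-minimizing for the tautological Finsler structure: the concatenation of the length-minimizing segments $[x,y]$ and $[y,z]$ is again a straight segment, hence length-minimizing, which immediately gives the additivity of $F$ along collinear triples.
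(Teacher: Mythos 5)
Your proposal is correct and follows essentially the same route as the paper's own proof: a case split on whether the common ray stays in $\Omega$, identification of the shared exit point $a^+(x,y)=a^+(x,z)=a^+(y,z)$, and telescoping of the logarithms (the paper likewise notes the alternative derivation from Theorem \ref{prop:th}). The only detail worth adding is the trivial case where two of the three points coincide, which the paper dispatches in one sentence.
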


This results follows from Theorem \ref{prop:th}, but it is also quite simple to prove it directly.
\begin{proof}
We can assume that the three points are distinct, otherwise the proof is trivial. We have $R(x,y)\subset\Omega\iff R(x,z)\subset\Omega\iff R(y,z)\subset\Omega$, and this holds if and only if the three quantities $F(x,y)$, $F(y,z)$ and $F(x,z)$ are equal to 0. Thus, the conclusion also holds trivially in this case. Therefore, we can assume that $R(x,y)\not\subset\Omega$. In this case, we have $a^+(x,y)= a^+(x,z)=a^+(y,z)$. Denoting this common point by $a^+$, we have
\[\frac{\vert x-a^+\vert}{\vert y-a^+\vert} \frac{\vert y-a^+\vert}{\vert z-a^+\vert}=
\frac{\vert x-a^+\vert}{\vert z-a^+\vert},\]
which implies 
\[\log \frac{\vert x-a^+\vert}{\vert y-a^+\vert}+\log \frac{\vert y-a^+\vert}{\vert z-a^+\vert}=
\log\frac{\vert x-a^+\vert}{\vert z-a^+\vert},\]
which completes the proof.
\end{proof}

\begin{corollary}\label{co:Funk-geodesic}
The Euclidean segments in $\Omega$ are geodesic segments for the Funk weak metric on $\Omega$. 
\end{corollary}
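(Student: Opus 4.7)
The corollary is a direct application of Proposition \ref{aligned-Funk}. My plan is as follows.

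Let $x,y\in\Omega$ be distinct and consider any order-preserving parametrization of the Euclidean segment $[x,y]$, e.g.\ $\gamma:[0,1]\to\Omega$ defined by $\gamma(t)=x+t(y-x)$ (the segment is contained in $\Omega$ by convexity of $\Omega$). To verify that $\gamma$ is geodesic for $F_\Omega$ in the sense of the definition given in Section 3, I pick arbitrary parameters $t_1\leq t_2\leq t_3$ in $[0,1]$ and set $p_i=\gamma(t_i)$. By construction the three points $p_1,p_2,p_3$ lie on a common Euclidean line and appear on it in that order.

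Proposition \ref{aligned-Funk} applies verbatim to the triple $(p_1,p_2,p_3)$ and yields
\[
 F_\Omega(p_1,p_2)+F_\Omega(p_2,p_3)=F_\Omega(p_1,p_3),
\]
which is exactly the geodesic condition for $\gamma$. Since the parametrization of the segment was arbitrary and only order matters in the definition of a geodesic, this shows that every Euclidean segment in $\Omega$ is a geodesic segment for $F_\Omega$.

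There is no real obstacle here: the corollary is a clean repackaging of the additivity along aligned triples proved in Proposition \ref{aligned-Funk}, combined with the fact that convexity of $\Omega$ guarantees the Euclidean segment stays inside $\Omega$ so that $F_\Omega$ is defined along it. No regularity or length-minimization argument is needed at this stage, since the notion of geodesic used in the paper is the metric (Busemann-type) one rather than a variational one.
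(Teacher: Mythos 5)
Your proof is correct and follows exactly the route the paper intends: the corollary is stated as an immediate consequence of Proposition \ref{aligned-Funk}, applied to the ordered collinear triple $\gamma(t_1),\gamma(t_2),\gamma(t_3)$ for any monotone parametrization of the segment. Nothing is missing; the degenerate cases (coincident points) are already absorbed into Proposition \ref{aligned-Funk}.
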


 Since the open set $\Omega$ is convex,  Corollary \ref{co:Funk-geodesic} implies that $(\Omega,F_{\Omega})$ is a geodesic weak metric space (any two points can be joined by a geodesic segment). It also says that $(\Omega,F_{\Omega})$ is a Desarguesian space in the sense of H.  Busemann (see \cite{Busemann1955}).

Notice that in general, the Euclidean segments are not the only geodesic segments for a Funk weak metric. In fact, the following proposition implies that there exist other types of geodesic segments in $\Omega$, provided there exists a Euclidean segment of nonempty interior contained in the boundary of $\Omega$.

\begin{proposition}\label{prop:obtuse}
Let $\Omega$ be an open convex subset of $\mathbb{R}^n$ such that $\partial\Omega$ contains a Euclidean segment $[p,q]$ and let $x$ and $z$ be two points in $\Omega$ such that  $R(x,z)\cap[p,q]\not=\emptyset$. Let $\Omega'$ be the intersection of $\Omega$ with the affine subspace of $\mathbb{R}^n$ spanned by $\{x\}\cup [p,q]$. Then, for any point $y$ in $\Omega'$ satisfying 
 $R(x,y)\cap[p,q]\not=\emptyset$ and  $R(y,z)\cap[p,q]\not=\emptyset$,
 we have $F(x,y)+F(y,z)=F(x,z)$.
\end{proposition}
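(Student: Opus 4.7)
The plan is to reduce to a supporting half-plane $H\supset\Omega'$, in which the Funk weak metric admits an explicit logarithmic expression that makes additivity immediate.

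First, I would work in the planar slice $\Omega'$: by the earlier proposition in this section stating that the Funk metric restricts to affine subspaces, $F_{\Omega'}$ is the restriction of $F_{\Omega}$, so it suffices to establish the identity inside $\Omega'$. The affine span of $\{x\}\cup[p,q]$ is genuinely a $2$-plane, since the line through $p,q$ cannot contain the interior point $x$ (that line meets $\partial\Omega$ along $[p,q]$). In this plane let $L$ be the line through $p,q$. Because $[p,q]\subset\partial\Omega$ and $\Omega$ is convex, $L$ is a support line of $\Omega'$, and I let $H$ be the open half-plane bounded by $L$ that contains $\Omega'$. The hypotheses $R(x,y)\cap[p,q]\ne\emptyset$, $R(y,z)\cap[p,q]\ne\emptyset$, $R(x,z)\cap[p,q]\ne\emptyset$ mean that each of these three rays meets $[p,q]\subset L$, and since none of them is contained in $L$, each one exits both $H$ and $\Omega$ at the same point of $[p,q]$. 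Consequently $F_{\Omega'}=F_{H}$ on the three pairs $(x,y)$, $(y,z)$, $(x,z)$.

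Second, I would invoke the explicit formula for the half-plane Funk metric obtained in Step~3 of the proof of Theorem \ref{prop:th}: writing $H=\{u\mid\langle\nu,u\rangle<s\}$, the relation $s-\langle\nu,u\rangle=\langle\nu,a^+(u,v)-u\rangle$ derived there gives
\[
F_{H}(u,v)=\log\bigl(s-\langle\nu,u\rangle\bigr)-\log\bigl(s-\langle\nu,v\rangle\bigr)
\]
for every pair whose ray $R(u,v)$ exits through $L$, and in particular for all three pairs here. Summing the expressions for $F_{H}(x,y)$ and $F_{H}(y,z)$ causes the $\log\bigl(s-\langle\nu,y\rangle\bigr)$ terms to cancel, leaving precisely $F_{H}(x,z)$; combined with the previous paragraph, this yields $F(x,y)+F(y,z)=F(x,z)$.

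The point I expect to require most care is the identification of the exit points in $\Omega$ and in $H$ on the three rays, i.e.\ the equality $F_{\Omega'}=F_{H}$ for the relevant pairs. I would handle it as follows: a ray based at an interior point of $\Omega'$ meets $L$ at most once (it is not contained in $L$); by $\Omega\subset H$, the first crossing of $\partial\Omega$ by the ray cannot occur strictly after this unique crossing with $L$; but that crossing already lies in $[p,q]\subset\partial\Omega$, so the two first-exit points coincide. Everything else in the argument is a direct calculation with the logarithm.
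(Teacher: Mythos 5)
Your proof is correct, and it arrives at the same telescoping identity as the paper, but by a different route to the key formula. The paper works directly in $\Omega'$ and shows by similar triangles that $F(u,v)=\log\frac{\vert u-u'\vert}{\vert v-v'\vert}$, where $u'$, $v'$ are the feet of the perpendiculars from $u$, $v$ onto the line $L$ through $p$ and $q$; additivity is then the telescoping of these ratios. You instead check that each of the three rays first exits $\Omega$ and the supporting half-plane $H$ at the same point of $[p,q]$, so the three relevant distances agree with those of $H$, and then you quote the half-plane formula from Step 3 of the proof of Theorem \ref{prop:th}, $F_H(u,v)=\log(s-\langle\nu,u\rangle)-\log(s-\langle\nu,v\rangle)$, whose right-hand side is a difference of a potential and hence telescopes automatically. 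Since $s-\langle\nu,u\rangle$ equals $\vert\nu\vert$ times the perpendicular distance from $u$ to $L$, your formula and the paper's are literally the same quantity; the difference is only that you import it from the earlier analytic computation while the paper rederives it synthetically and self-containedly. Your version buys economy and makes the mechanism transparent (on the flat part of the boundary, $F$ is a potential difference); and the point you flagged as delicate --- that the first exit point from $\Omega$ along each ray is its unique intersection with $L$, which lies in $[p,q]$ --- is indeed the step that needs checking, and your convexity argument for it is sound.
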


\begin{figure}[!htbp] 
\centering

\psfrag{p}{\small $p$}
\psfrag{q}{\small $q$}
\psfrag{x}{\small $x$}
\psfrag{y}{\small $y$}
\psfrag{z}{\small $z$}
\psfrag{a}{\small $x'$}
\psfrag{b}{\small $y'$}
\psfrag{c}{\small $z'$}
\includegraphics[width=.50\linewidth]{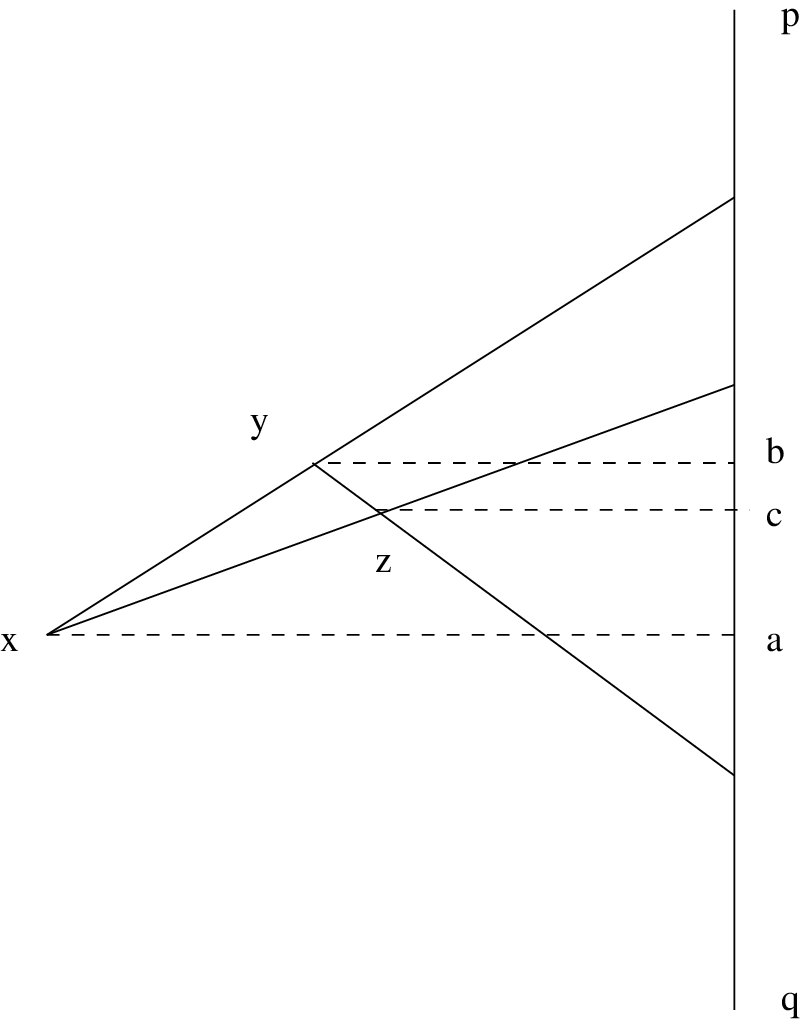}
\caption{\small}
\label{fig:finsler1}
\end{figure}

 \begin{proof} It suffices to work in the space $\Omega'$. Let $x'$, $y'$ and $z'$ denote the feet of the perpendiculars from $x$ and $z$ respectively on the Euclidean line joining the points $p$ and $q$ (see Figure \ref{fig:finsler1}). 
Let $b = R(x,z)\cap[p,q]$. Since the triangles  $bxx'$  and  $bzz'$
are similar, we have  
\[
F(x,z)=\log\frac{\vert x-b\vert}{\vert z-b\vert}
  =\log\frac{\vert x-x'\vert}{\vert z-z'\vert}.
\]
Similar formulas hold for $F(x,y)$ and $F(y,z)$. Therefore,
\begin{align*}
F(x,z) & = \log\frac{\vert x-x'\vert}{\vert z-z'\vert}
     \\ & =
     \log\left(\frac{\vert x-x'\vert}{\vert y-y'\vert}\frac{\vert y-y'\vert}{\vert z-z'\vert}\right) 
      \\ & = \log\left(\frac{\vert x-x'\vert}{\vert y-y'\vert}\right)  + 
    \log\left(\frac{\vert y-y'\vert}{\vert z-z'\vert} \right)
    \\ & = F(x,y) + F(y,z).
\end{align*}

\end{proof}

\begin{remark}
By taking limits of polygonal paths, we can easily construct, from Proposition \ref{prop:obtuse}, smooth paths which are not Euclidean paths and which are geodesic  for the Funk weak metric.
\end{remark}

  \bigskip

\begin{proposition}\label{prop:geod-Funk}
  Let $\Omega$ be an open convex subset of $\mathbb{R}^n$. Let $x$ and $z$ be two distinct points in $\Omega$ such that $R(x,z)\cap\partial\Omega\not=\emptyset$ and such that at the point $b=R(x,z)\cap\partial\Omega$,  there is a support hyperplane whose intersection with $\partial\Omega$ is reduced to $b$. Let $y$ be a point in $\Omega$ such that the three points $x,y,z$ in $\Omega$ do not lie on the same affine line. Then, $F(x,z)<F(x,y)+F(y,z)$.
  \end{proposition}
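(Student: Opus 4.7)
The plan is to reduce the problem to a comparison with the Funk metric of the supporting half-space at $b = R(x,z) \cap \partial\Omega$ and then to exploit the strict convexity assumption $A \cap \partial\Omega = \{b\}$ to produce the strict inequality. Let $A$ be such a support hyperplane, write $A = \{w \in \mathbb{R}^n : \langle \nu, w\rangle = s\}$ with $\nu$ pointing outward, and let $H = \{w : \langle\nu,w\rangle < s\}$ be the open half-space containing $\Omega$. By Theorem~\ref{prop:th} the Funk weak metric coincides with the tautological length metric, so Lemma~\ref{lemma:inclusion} gives $F_H(p,q) \leq F_\Omega(p,q)$ for all $p,q \in \Omega$. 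Since $b \in A$ is precisely the exit point of the ray $R(x,z)$ from both $\Omega$ and $H$, we moreover have $F_H(x,z) = F_\Omega(x,z)$.

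Set $u_p := s - \langle \nu, p\rangle$ for $p \in \Omega$; then $u_p > 0$, the formula
\[
 F_H(p,q) = \max\!\left\{ \log \frac{u_p}{u_q},\, 0\right\}
\]
recalled in the preceding section applies, and the fact that $R(x,z)$ reaches $b \in A$ after passing through $z$ forces $u_x > u_z > 0$. If $u_y > u_x$ or $u_y < u_z$, a direct computation from this formula gives $F_H(x,y) + F_H(y,z) > F_H(x,z)$, and combined with $F_H \leq F_\Omega$ applied to the two pairs on the left, the required strict triangle inequality $F_\Omega(x,y) + F_\Omega(y,z) > F_\Omega(x,z)$ follows at once.

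It remains to handle the case $u_z \leq u_y \leq u_x$. Here the above formula yields the equality $F_H(x,y) + F_H(y,z) = F_H(x,z)$, so the strict inequality must come from strictly improving $F_H$ to $F_\Omega$ on at least one of the pairs. Since $u_x > u_z$, at least one of $u_x > u_y$ or $u_y > u_z$ holds; I treat the first, the other being symmetric. The ray $R(x,y)$ is not parallel to $A$ (because $u_x \neq u_y$), hence it meets $A$ at a unique point $a^H$; moreover $a^H \neq b$, for otherwise $b, x, y$ would be collinear, and since $b, x, z$ are already collinear this would force $x, y, z$ to lie on a common affine line, contradicting the hypothesis. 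The assumption $A \cap \partial\Omega = \{b\}$ (combined with $A \cap \Omega^\circ = \emptyset$) gives $a^H \notin \overline{\Omega}$, so $R(x,y)$ leaves $\Omega$ at a point $a^+ \in \partial\Omega$ strictly before reaching $a^H$, i.e.\ $|x - a^+| < |x - a^H|$. Since the function $t \mapsto t/(t - |x-y|)$ is strictly decreasing on $(|x-y|, \infty)$, the formulas $F_\Omega(x,y) = \log(|x - a^+|/|y - a^+|)$ and $F_H(x,y) = \log(|x - a^H|/|y - a^H|)$ yield $F_\Omega(x,y) > F_H(x,y)$. Adding the inequality $F_\Omega(y,z) \geq F_H(y,z)$ gives $F_\Omega(x,y) + F_\Omega(y,z) > F_H(x,z) = F_\Omega(x,z)$.

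The main obstacle is the saturated case $u_z \leq u_y \leq u_x$: the half-space alone cannot detect the strict inequality there, and the non-collinearity of $x, y, z$ has to be converted into the geometric statement that $A$ meets $\overline{\Omega}$ only at $b$, so that the ray $R(x,y)$ (or $R(y,z)$) is forced to exit $\Omega$ at a point lying strictly inside $H$.
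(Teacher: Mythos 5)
Your proof is correct and follows essentially the same strategy as the paper's: compare $F_\Omega$ with the Funk metric of the supporting half-space $H$ at $b$, note that $F_H(x,z)=F_\Omega(x,z)$, and use the hypothesis $A\cap\partial\Omega=\{b\}$ together with non-collinearity to force the exit point of $R(x,y)$ (or $R(y,z)$) from $\Omega$ to lie strictly before its exit from $H$, yielding a strict gain. The paper organizes this as three cases in the plane of $x,y,z$ using similar triangles, while you organize it as two cases via the linear functional $u$ and the explicit half-space formula, but the underlying mechanism is identical.
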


        \begin{proof}
To prove the proposition, we work in the affine plane spanned by $x$, $y$ and $z$ and therefore we can assume without loss of generality that $n=2$. 
  
 We assume that the intersection points of $R(x,y)$ and $R(y,z)$ with $\partial\Omega$ are not empty, and we let $a$ and $c$ be respectively these points. From the hypothesis, there is a support line of $\Omega$ (which we call $D$) at $b$ whose intersection with $\partial\Omega$ is reduced to the point $b$.
  
For the proof, we distinguish three cases.

  \begin{figure}[!htbp]
\centering

\psfrag{D}{\small $D$}
\psfrag{m}{\small $a'$}
\psfrag{a}{\small $a$}
\psfrag{x}{\small $x$}
\psfrag{y}{\small $y$}
\psfrag{z}{\small $z$}
\psfrag{c}{\small $c$}
\psfrag{b}{\small $b$}
\psfrag{n}{\small $c'$}
\includegraphics[width=.25\linewidth]{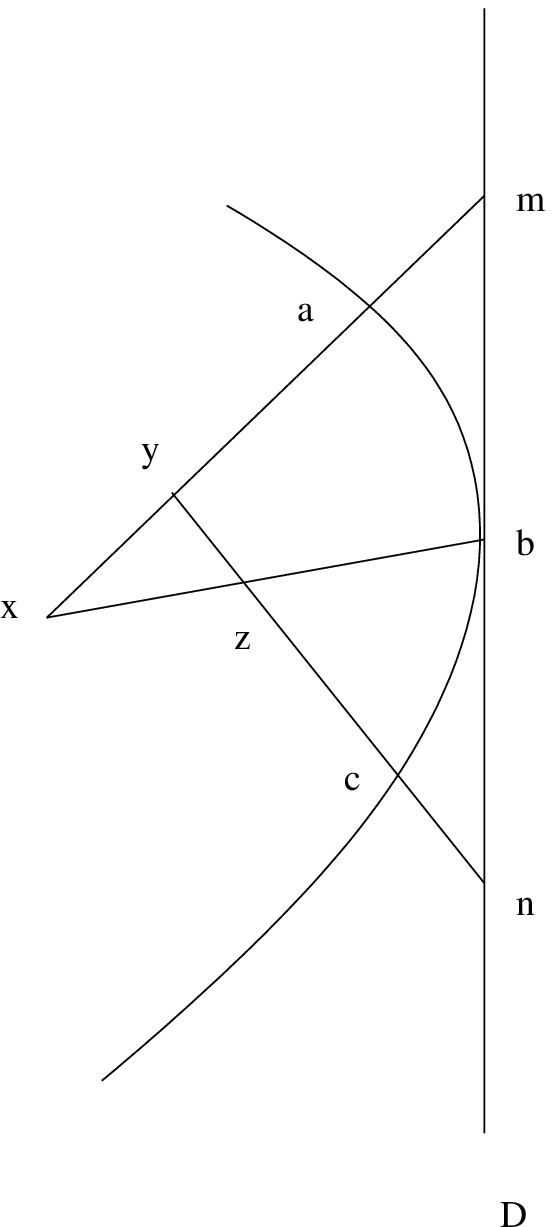}
\caption{\small {}}
\label{fig:finsler2}
\end{figure}

\noindent {Case 1.---} The two rays $R(x,y)$ and $R(y,z)$ intersect the line $D$ (see Figure \ref{fig:finsler2}).

Let   $a'$ and $c'$ be respectively these intersection points. Note that the three points $a'$, $b$ and $c'$ are in that order on $D$.    
By reasoning with projections on the line $D$   and arguing as we did in the proof of
Proposition \ref{prop:obtuse}, we have
  \[\frac{\vert x-b\vert}{\vert z-b\vert}= \frac{\vert x-a'\vert}{\vert y-a'\vert}
  \frac{\vert y-c'\vert}{\vert z-c'\vert}.\]
  Since we have 
  \[\frac{\vert x-a'\vert}{\vert y-a'\vert}< \frac{\vert x-a\vert}{\vert y-a\vert}
  \]
and 
   \[\frac{\vert y-c'\vert}{\vert z-c'\vert}< \frac{\vert y-c\vert}{\vert z-c\vert},
  \]
  we obtain
    \[\frac{\vert x-b\vert}{\vert z-b\vert}<\frac{\vert x-a\vert}{\vert y-a\vert}
  \frac{\vert y-c\vert}{\vert z-c\vert}\]
   which gives, by taking logarithms,  $ F(x,z)<F(x,y)+F(y,z)$.
 
 \medskip
 
 \noindent {Case 2.---} The ray $R(x,y)$ intersects $D$ and the ray $R(y,z)$ does not intersect $D$ (Figure \ref{fig:finsler3}). We let as before $a'$ denote the point $R(x,y)\cap D$.
 
 Let $D'$ be the Euclidean line passing through $z$ and parallel to $D$. The hypotheses in the case considered imply that the line $D'$ intersects the segment $[x,y]$. Let $y'$ be this intersection point. The point $y'$ is contained in $\Omega$.
 
 We have, as in Case 1, 
 \[\displaystyle F(x,z)=\log \frac{\vert x-b\vert}{\vert z-b\vert}\]
  and 
 \[F(x,y)=\log \frac{\vert x-a\vert}{\vert y-a\vert}> \log \frac{\vert x-a'\vert}{\vert y-a'\vert}.\]
 Now we have
 \[
 \frac{\vert x-b\vert}{\vert z-b\vert}=\frac{\vert x-a'\vert}{\vert y'-a'\vert}<
 \frac{\vert x-a'\vert}{\vert y-a'\vert},
 \]
 that is, $F(x,z)<F(x,y)$, which implies the desired result.

  \begin{figure}[!htbp]
\centering

\psfrag{D}{\small $D$}
\psfrag{L}{\small $D'$}
\psfrag{m}{\small $a'$}
\psfrag{k}{\small $x$}
\psfrag{y}{\small $y$}
\psfrag{z}{\small $z$}
\psfrag{b}{\small $b$}
\psfrag{s}{\small $y'$}
\includegraphics[width=.30\linewidth]{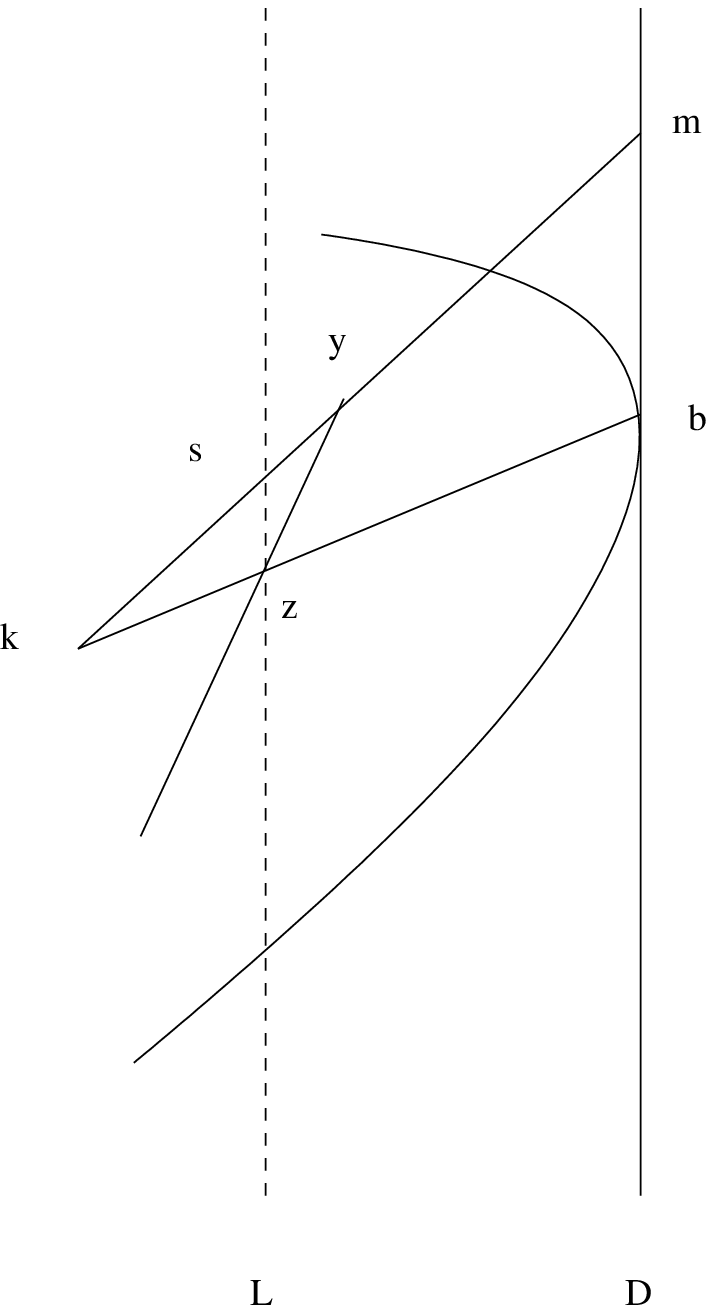}
\caption{\small }
\label{fig:finsler3}
\end{figure}

 \medskip
 
 \noindent {Case 3.---} The ray $R(x,y)$ does not intersect the line $D$. This case can be treated as Case 2, and we have in this case $F(x,y)<F(y,z)$, which implies the desired result.
 
\end{proof}

    The following is a direct consequence of Proposition \ref{prop:geod-Funk}.

  \begin{corollary}\label{cor:geod-Funk}
  Let $\Omega$ be an open bounded strictly convex subset of $\mathbb{R}^n$ and let $x$, $y$ and $z$ be three points in $\Omega$ that are not contained in an affine segment. Then, $F(x,z)<F(x,y)+F(y,z)$.
  \end{corollary}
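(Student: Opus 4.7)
The plan is simply to verify the three hypotheses of Proposition \ref{prop:geod-Funk} and apply it. This is why the statement is labelled as a corollary.

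First, since $\Omega$ is bounded, the Euclidean ray $R(x,z)$ cannot be entirely contained in $\Omega$, so $R(x,z)\cap\partial\Omega$ is nonempty and contains a unique point $b$ (the exit point of the ray). Next, since $b\in\partial\Omega$ and $\Omega$ is convex, the standard result recalled just after the definition of support hyperplane (see e.g.\ \cite{Eggleston}, p.\ 20) guarantees the existence of at least one support hyperplane $A$ to $\Omega$ at $b$. The strict convexity of $\Omega$, together with characterization (2) of Proposition \ref{prop: strictly-convex}, implies that this support hyperplane $A$ meets $\partial\Omega$ in exactly one point, namely $b$ itself. Finally, the hypothesis that the three points $x,y,z$ are not contained in an affine segment is equivalent to saying that they are not collinear (three points on a common line always lie on the segment joining two of their extremes), so $y$ does not belong to the affine line through $x$ and $z$.

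With these three observations, the hypotheses of Proposition \ref{prop:geod-Funk} are satisfied verbatim, so the strict inequality $F(x,z)<F(x,y)+F(y,z)$ follows directly. There is no real obstacle here; the only point that requires a moment of thought is the reduction from ``not contained in an affine segment'' to ``not collinear'', which I would spell out briefly to make the proof self-contained.
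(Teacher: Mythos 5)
Your proof is correct and takes essentially the same approach as the paper, which simply declares the corollary to be a direct consequence of Proposition \ref{prop:geod-Funk}; you have only made explicit the routine verification of its hypotheses (boundedness forces $R(x,z)$ to meet $\partial\Omega$, strict convexity via Proposition \ref{prop: strictly-convex} yields a support hyperplane meeting $\partial\Omega$ only at $b$, and ``not contained in an affine segment'' is the same as non-collinearity).
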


  \begin{corollary}\label{cor:geo-f}
  Let $\Omega$ be an open bounded strictly convex subset of $\mathbb{R}^n$. Then, the affine segments in $\Omega$ are the only geodesic segments for the Funk weak metric of $\Omega$.
\end{corollary}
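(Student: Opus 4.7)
The plan is to derive the statement directly from the strict triangle inequality of Corollary~\ref{cor:geod-Funk}. The inclusion ``affine segments are geodesic'' is already supplied by Corollary~\ref{co:Funk-geodesic}, so only the converse requires argument: every geodesic segment in $\Omega$ is (the image of a monotone parameterization of) an affine segment.

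First, let $\gamma : I \to \Omega$ be a geodesic. For any three parameter values $t_1 < t_2 < t_3$ in $I$, the defining identity
\[
F_{\Omega}(\gamma(t_1),\gamma(t_2)) + F_{\Omega}(\gamma(t_2),\gamma(t_3)) = F_{\Omega}(\gamma(t_1),\gamma(t_3))
\]
combined with the contrapositive of Corollary~\ref{cor:geod-Funk} (this is the step that consumes both strict convexity and boundedness of $\Omega$) forces the three image points to lie on a common affine line. To pass from this three-point statement to the whole image, I assume $\gamma$ is non-constant, fix parameters $s_0 < s_1$ in $I$ with $\gamma(s_0)\neq\gamma(s_1)$, and let $L$ denote the unique affine line through these two points. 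For any other $s\in I$, applying the previous step to the triple $\{s_0,s_1,s\}$ arranged increasingly shows that $\gamma(s)$ is collinear with $\gamma(s_0)$ and $\gamma(s_1)$, hence $\gamma(s) \in L$. Thus the image of $\gamma$ is contained in the affine segment $L\cap\Omega$.

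It remains to check that $\gamma$ traverses this segment monotonically. The strong separation of $F_{\Omega}$ on a bounded convex set (Proposition~\ref{eq:conv-F}) gives $F_{\Omega}(x,y)>0$ for every pair of distinct points, so combining with Proposition~\ref{aligned-Funk} rules out every ordering of three distinct collinear image points compatible with the geodesic equality, except the one in which $\gamma(t_2)$ lies between $\gamma(t_1)$ and $\gamma(t_3)$ on $L$. The main obstacle is not really a serious one: Corollary~\ref{cor:geod-Funk} carries all of the geometric content, and the remaining work consists in propagating the three-point collinearity to the full image and disposing of degenerate parameterizations.
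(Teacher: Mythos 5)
Your proposal is correct and follows essentially the same route as the paper, which also deduces the result from Corollary~\ref{co:Funk-geodesic} together with the strict triangle inequality of Corollary~\ref{cor:geod-Funk}. You merely make explicit two points the paper leaves implicit (propagating the three-point collinearity to the whole image, and using Proposition~\ref{aligned-Funk} with strong separation to get monotone traversal), and both of these steps are carried out correctly.
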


\begin{proof} This follows from the previous Corollary and  Corollary \ref{co:Funk-geodesic}, which says that the affine segments are geodesic segments for the Funk weak metric.  
  
\end{proof}

        We recall that a subset $Y$ in  a (weak) metric space $X$ is said to be \emph{geodesically convex}  if for any two points $x$ and $y$ in $Y$, any geodesic segment in $X$ joining $x$ and $y$ is contained in $Y$.

   \begin{corollary}\label{cor:conv-F} 
  Let $\Omega$ be an open bounded strictly convex subset of $\mathbb{R}^n$ and let $\Omega'$ be a subset of $\Omega$. Then, $\Omega'$ is convex with respect to the affine structure of $\mathbb{R}^n$ if and only if $\Omega'$ is a geodesically convex subset of $\Omega$ with respect to the Funk metric $F_{\Omega}$.
    \end{corollary}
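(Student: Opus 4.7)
The plan is to deduce both implications directly from the two preceding corollaries, which together pin down exactly what the geodesic segments of $(\Omega, F_\Omega)$ are in the strictly convex case.

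First I would note the set-up: by Corollary \ref{co:Funk-geodesic}, every affine segment in $\Omega$ is a geodesic segment for $F_\Omega$; and by Corollary \ref{cor:geo-f}, under the hypothesis that $\Omega$ is open, bounded and strictly convex, the affine segments are the \emph{only} geodesic segments for $F_\Omega$. So the notions of ``affine segment in $\Omega$ joining $x$ to $y$'' and ``geodesic segment of $F_\Omega$ joining $x$ to $y$'' coincide for any pair $x,y \in \Omega$.

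For the forward implication, assume $\Omega' \subseteq \Omega$ is affinely convex, and take $x,y \in \Omega'$. Any geodesic segment of $F_\Omega$ joining $x$ and $y$ is, by Corollary \ref{cor:geo-f}, the affine segment $[x,y]$, which lies in $\Omega'$ by affine convexity. Hence $\Omega'$ is geodesically convex. Conversely, assume $\Omega'$ is geodesically convex for $F_\Omega$, and take $x,y \in \Omega'$. By Corollary \ref{co:Funk-geodesic}, the affine segment $[x,y]$ is a geodesic segment for $F_\Omega$ joining $x$ to $y$; geodesic convexity of $\Omega'$ in $(\Omega, F_\Omega)$ then forces $[x,y] \subseteq \Omega'$, so $\Omega'$ is affinely convex.

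There is essentially no obstacle here; the entire content of the corollary has already been absorbed into Corollaries \ref{co:Funk-geodesic} and \ref{cor:geo-f}, and this final statement is just their combined reformulation in the language of geodesically convex subsets. The only minor point worth making explicit in the write-up is that strict convexity of $\Omega$ is used solely to invoke Corollary \ref{cor:geo-f} (uniqueness of geodesics up to affine segments); without it, the forward implication would fail in general because of the additional geodesics produced by Proposition \ref{prop:obtuse}.
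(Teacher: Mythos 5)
Your argument is correct and is exactly the intended one: the paper states this corollary without a separate proof precisely because, as you observe, it is the immediate combination of Corollary \ref{co:Funk-geodesic} (affine segments are geodesics, giving the backward implication) and Corollary \ref{cor:geo-f} (in the strictly convex case they are the only geodesics, giving the forward implication). Your remark that strict convexity is needed only for the forward implication is also accurate.
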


    \begin{remark}
Note the formal analogy between Corollary \ref{cor:geo-f} and the following well known result on the geodesic segments of a Minkowski metric on $\mathbb{R}^n$: if the unit ball of a Minkowski  metric is strictly convex, then the only geodesic segments of this metric are the affine segments.
\end{remark}

We now consider spheres and balls in a Funk weak metric space $(\Omega,F)$. As this weak metric is non-symmetric, we have to distinguish between right and left spheres, and we use the following notations.
For any point $x$ in $\Omega$ and any nonnegative real number $\delta$, we set 

 \medskip  \hspace{+.2cm}  $\circ$ \
$B(x,\delta)= \{y\in \Omega\ \vert \ F_{\Omega}(x,y)<\delta\}$ (the \emph{right open ball of center $x$ and radius $\delta$});

 \medskip  \hspace{+.2cm}  $\circ$ \
$B'(x,\delta)= \{y\in \Omega\ \vert \ F_{\Omega}(y,x)<\delta\}$ (the \emph{left open ball of center $x$ and radius $\delta$});

 \medskip  \hspace{+.2cm}  $\circ$ \
$S(x,\delta)= \{y\in \Omega\ \vert \ F_{\Omega}(x,y)=\delta\}$
(the \emph{right sphere of center $x$ and radius $\delta$}); 

 \medskip  \hspace{+.2cm}  $\circ$ \
$S'(x,\delta)= \{y\in \Omega\ \vert \ F_{\Omega}(y,x)=\delta\}$
 (the \emph{left sphere of center $x$ and radius $\delta$}).

  In \cite{Busemann1944} p. 20,  H. Busemann discusses topologies for general weak metric spaces. In the case of a genuine metric space, the open balls are used to define the topology of that space. In general, the collections of left and of right open balls in a weak metric space generate two different topologies. For the Funk weak metric, we have the following
 
If  $\Omega$ is a bounded convex open set of $\mathbb{R}^n$ equipped with its Funk weak metric; then, the collections of left and of right open balls are sub-bases of the same topology on $\Omega$, and this topology coincides with the topology induced from the inclusion of $\Omega$ in $\mathbb{R}^n$.

In the case where the convex open set $\Omega$ is unbounded, the left and the right open balls of the Funk weak metric are always noncompact. In the next proposition, we study these balls in the case where $\Omega$ is bounded.  We recall that a  convex subset of $\mathbb{R}^n$ is unbounded if and only if it contains a Euclidean ray.

 \begin{proposition}\label{prop:balls-homothetic}
 Let $\Omega$ be a bounded convex open subset of $\mathbb{R}^n$,  let $x$ be a point in $\Omega$ and let $\delta$ be a nonnegative real number. Then,
 \begin{enumerate}
\item \label{im1} The right sphere $S(x,\delta)$ is convex as a subset of $\mathbb{R}^n$, and it is compact. Furthermore, this sphere is the image of $\partial\Omega$ by the Euclidean homothety $\sigma$ of center $x$ and factor $(1-e^{-\delta})$.

\item \label{im2} The left sphere $S'(x,\delta)$ is convex as a subset of $\mathbb{R}^n$, and it is equal to the intersection with $\Omega$ of the  image of $\partial\Omega$ by the Euclidean homothety of center $x$ and of factor $(e^{\delta}-1)$, followed by the Euclidean central symmetry of center $x$. The sphere $S'(x,\delta)$ is not necessarily compact.
\end{enumerate}
 \end{proposition}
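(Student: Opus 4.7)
My overall strategy is to reduce both assertions to one-dimensional calculations carried out along each ray from $x$: given a direction, the Funk distance from $x$ to a point $y$ on that ray (resp.\ from $y$ to $x$) depends only on the position of $y$ relative to the relevant boundary intersection, so the level set $F(x,\cdot)=\delta$ (resp.\ $F(\cdot,x)=\delta$) will be identified, ray-by-ray, with the image of $\partial\Omega$ under an explicit homothety centered at $x$. Convexity (in the sense that the sphere bounds a convex region) and compactness will then follow from the fact that homotheties preserve convexity and compactness.

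For (\ref{im1}), I would first note that any $y\in S(x,\delta)$ with $\delta>0$ satisfies $R(x,y)\not\subset\Omega$, so the exit point $a^+=R(x,y)\cap\partial\Omega$ is well defined; writing $y=x+t(a^+-x)$ with $t\in(0,1)$, the equation $F(x,y)=\log\frac{|x-a^+|}{|y-a^+|}=\delta$ reduces to $(1-t)^{-1}=e^{\delta}$, hence $t=1-e^{-\delta}$. Conversely, for any $a^+\in\partial\Omega$, the point $\sigma(a^+):=x+(1-e^{-\delta})(a^+-x)$ lies in $\Omega$ (an open segment from an interior point to a boundary point stays in the interior) and the same computation in reverse gives $F(x,\sigma(a^+))=\delta$. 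This yields $S(x,\delta)=\sigma(\partial\Omega)$; compactness follows from compactness of $\partial\Omega$, and the convex-hypersurface statement from the fact that $\sigma(\Omega)$ is convex with boundary $\sigma(\partial\Omega)$.

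For (\ref{im2}), I would run the analogous calculation with the roles of $x$ and $y$ swapped. A point $y\in S'(x,\delta)$ satisfies $F(y,x)=\delta$; letting $b=R(y,x)\cap\partial\Omega$, I would parametrize $b=y+s(x-y)$ with $s>1$ (the ray reaches the boundary only past $x$, since $x\in\Omega$), so $|y-b|=s|x-y|$ and $|x-b|=(s-1)|x-y|$, and the equation $\log\frac{s}{s-1}=\delta$ forces $s=e^{\delta}/(e^{\delta}-1)$. Solving for $y$ yields $y=\tau(b):=x-(e^{\delta}-1)(b-x)$, which is precisely the homothety of center $x$ and factor $(e^{\delta}-1)$ composed with the central symmetry at $x$. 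For the converse, given $b\in\partial\Omega$ with $\tau(b)\in\Omega$, convexity of $\Omega$ ensures that the open segment from $x$ to $b$ lies in $\Omega$, so $b$ really is the first exit point of the ray $R(\tau(b),x)$; the above computation then returns $F(\tau(b),x)=\delta$. Therefore $S'(x,\delta)=\Omega\cap\tau(\partial\Omega)$, and convexity of $\tau(\Omega)$ again makes $\tau(\partial\Omega)$ a convex hypersurface.

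The step requiring the most care, and the one that accounts for the contrast between (\ref{im1}) and (\ref{im2}), is justifying why the intersection with $\Omega$ is genuinely needed on the left, and why this forces the loss of compactness. Unlike $\sigma$, which contracts toward the interior point $x$ and automatically sends $\partial\Omega$ inside $\overline{\Omega}$, the map $\tau$ dilates by a factor that exceeds $1$ once $\delta>\log 2$, so $\tau(\partial\Omega)$ can cross $\partial\Omega$ transversally. When it does, $\Omega\cap\tau(\partial\Omega)$ is a relatively open piece of $\tau(\partial\Omega)$ whose closure in $\mathbb{R}^n$ meets $\partial\Omega$ at points not belonging to $\Omega$, so $S'(x,\delta)$ fails to be closed and hence to be compact. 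I would conclude by exhibiting a concrete example (say $\Omega$ a Euclidean ball with $x$ off-center, choosing $\delta$ so that the image $\tau(\partial\Omega)$ crosses $\partial\Omega$) to confirm that compactness really can fail.
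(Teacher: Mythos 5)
Your proposal is correct and follows essentially the same route as the paper: a ray-by-ray computation showing that $F(x,y)=\delta$ is equivalent to $\vert y-x\vert=(1-e^{-\delta})\vert x-a^+\vert$ and $F(y,x)=\delta$ to $\vert y-x\vert=(e^{\delta}-1)\vert x-a^-\vert$, which identifies the two spheres with the stated homothetic images of $\partial\Omega$. Your additional care with the converse inclusions and the explicit discussion of why $S'(x,\delta)$ can fail to be compact only elaborate on what the paper leaves as ``follows easily.''
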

   \begin{proof}
Let $y$ be a point in $\Omega$ and let us set, as before,  $a^+=R(x,y)\cap \partial\Omega$. We have the following equivalences:
\[y\in S(x,\delta)\iff \log\frac{\vert x-a^+\vert}{\vert y-a^+\vert}=\delta\iff \displaystyle \frac{\vert x-a^+\vert}{\vert y-a^+\vert}=e^{\delta},\] which is easily seen to be equivalent to 
$\vert y-x\vert=\vert x-a^+\vert (1-e^{-\delta})$. From this fact  Property (\ref{im1}) follows easily.

To prove  Property (\ref{im2}), let $a^-=R(y,x)\cap\partial \Omega$. We have the following equivalences:
 \[\log\frac{\vert y-a^-\vert}{\vert x-a^-\vert}=\delta\iff
 \vert y-a^-\vert=e^{\delta}\vert x-a^-\vert,\]
 which is also equivalent to
 \[\vert y-x\vert=(e^{\delta}-1)\vert x-a^-\vert.\]
 
 Thus, $y\in S'(x,\delta)$ if and only if $y$ is in the intersection of $\Omega$ with the image $\sigma(\partial\Omega)$ of $\partial\Omega$ by the Euclidean homothety with center $x$ and of factor $(e^{\delta}-1)$, followed by the Euclidean central symmetry of center $x$. This intersection is convex as a subset of $\mathbb{R}^n$ but it is not necessarily a compact subset of $(\Omega,F)$. Thus, $S'(x,\delta)$ is compact if and only if $\sigma(\partial\Omega)$ is contained in $\Omega$.
 \end{proof}
 
 We note the following ``local-implies-global" property of Funk weak metrics. The meaning of the statement is clear, and it follows directly from Proposition \ref{prop:balls-homothetic} (\ref{im1}).
 
 \begin{corollary} We can reconstruct the boundary $\partial\Omega$ of $\Omega$ from the local geometry at any point of $\Omega$.
\end{corollary}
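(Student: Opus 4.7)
The plan is to unpack the statement as saying that if we know the Funk weak metric $F_\Omega$ restricted to an arbitrarily small neighborhood of some point $x \in \Omega$, then we can recover $\partial\Omega$ in $\mathbb{R}^n$. The key input is Proposition \ref{prop:balls-homothetic} (\ref{im1}): for every $\delta \geq 0$, the right sphere $S(x,\delta)$ is precisely the image of $\partial\Omega$ under the Euclidean homothety of center $x$ and factor $1 - e^{-\delta}$. Inverting this homothety recovers $\partial\Omega$ from $S(x,\delta)$.

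First, fix any point $x \in \Omega$ and choose $\delta > 0$ small enough that $S(x,\delta)$ is contained in any prescribed neighborhood $U$ of $x$; this is possible since, by Proposition \ref{prop:balls-homothetic} (\ref{im1}), $S(x,\delta)$ is obtained from $\partial\Omega$ by a homothety with factor $1 - e^{-\delta} \to 0$ as $\delta \to 0^+$, so these spheres shrink to $\{x\}$ inside $\Omega$. In particular $S(x,\delta) \subset U$ for all sufficiently small $\delta$, so the set $S(x,\delta)$ is determined purely by the values of $F_\Omega$ in $U \times U$, i.e.\ by local data at $x$.

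Second, apply the inverse homothety: by Proposition \ref{prop:balls-homothetic} (\ref{im1}), every $y \in \partial\Omega$ can be written as $y = x + \frac{1}{1 - e^{-\delta}}(y' - x)$ for a unique $y' \in S(x,\delta)$ (and conversely). Thus
\[
\partial\Omega = \Bigl\{\, x + \tfrac{1}{1 - e^{-\delta}}(y' - x) \;\Big|\; y' \in S(x,\delta)\,\Bigr\},
\]
which expresses $\partial\Omega$ explicitly in terms of the locally available sphere $S(x,\delta)$.

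There is essentially no obstacle beyond observing that Proposition \ref{prop:balls-homothetic} (\ref{im1}) is an exact homothety rather than merely a qualitative similarity, so the factor $1 - e^{-\delta}$ is known and the inversion is canonical. The only mild subtlety is the phrase ``local geometry at a point of $\Omega$,'' which must be interpreted as knowledge of $F_\Omega$ in a neighborhood of $x$; once this is granted, the corollary is immediate from the observation that arbitrarily small right spheres around $x$ are scaled copies of $\partial\Omega$.
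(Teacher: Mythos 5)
Your proposal is correct and matches the paper's own argument: the paper simply observes that the corollary follows directly from Proposition \ref{prop:balls-homothetic} (\ref{im1}), since the right spheres $S(x,\delta)$ are homothetic copies of $\partial\Omega$ centered at $x$ with known factor $1-e^{-\delta}$, and you have merely spelled out the (straightforward) inversion of that homothety together with the observation that small spheres lie in any prescribed neighborhood.
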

   
      \begin{corollary}\label{cor:bal-F} Let $\Omega$ be a bounded open strictly convex subset of $\mathbb{R}^n$. Then, the left and right open balls of $\Omega$ are geodesically convex with respect to the Funk weak metric $F_{\Omega}$.
    \end{corollary}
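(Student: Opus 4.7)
The plan is to deduce this corollary directly from Corollary \ref{cor:conv-F}, which asserts that when $\Omega$ is a bounded strictly convex open set, a subset of $\Omega$ is geodesically convex with respect to $F_\Omega$ if and only if it is affinely convex as a subset of $\mathbb{R}^n$. It therefore suffices to establish that each of the open balls $B(x,\delta)$ and $B'(x,\delta)$ is convex in the ordinary affine sense. The case $\delta=0$ is trivial since both balls are empty, so I may assume $\delta>0$ throughout.

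For the right ball, I will extract from the proof of Proposition \ref{prop:balls-homothetic}~(\ref{im1}) the fact that a point $y\in\Omega$ belongs to $B(x,\delta)$ if and only if $|y-x|<(1-e^{-\delta})|x-a^+|$, where $a^+=R(x,y)\cap\partial\Omega$. Letting $\sigma$ denote the Euclidean homothety of center $x$ and ratio $(1-e^{-\delta})$, this inequality is exactly the condition $y\in\sigma(\Omega)$. Because the ratio lies in $(0,1)$ and $x\in\Omega$, the image $\sigma(\Omega)$ is automatically contained in $\Omega$, so $B(x,\delta)=\sigma(\Omega)$. As the image of a convex set under an affine map, $B(x,\delta)$ is convex.

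For the left ball the argument is parallel but requires one extra step. The same type of computation, already present in the proof of Proposition \ref{prop:balls-homothetic}~(\ref{im2}), gives that $y\in B'(x,\delta)$ iff $y\in\Omega$ and $|y-x|<(e^{\delta}-1)|x-a^-|$, with $a^-=R(y,x)\cap\partial\Omega$. Let $\sigma'$ denote the homothety of center $x$ and ratio $-(e^{\delta}-1)$, that is, the composition of the positive homothety of ratio $(e^{\delta}-1)$ and the central symmetry about $x$. A direct verification, using that $a^- - x$ points in the direction opposite to $y-x$ with magnitude equal to the radial function $r_{\Omega,x}$ in that direction, shows that the above inequality is equivalent to $y\in\sigma'(\Omega)$. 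Hence $B'(x,\delta)=\Omega\cap\sigma'(\Omega)$, and this is convex as an intersection of two convex subsets of $\mathbb{R}^n$.

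The main subtlety I expect lies in the left-ball case: in contrast with the right ball, when $\delta\geq\log 2$ the ratio $e^{\delta}-1$ is at least one, so $\sigma'(\Omega)$ need not be contained in $\Omega$ and the intersection with $\Omega$ cannot be dispensed with (this is also precisely why $S'(x,\delta)$ can fail to be compact, as noted in Proposition \ref{prop:balls-homothetic}). Once both balls are identified as intersections of affinely convex sets, Corollary \ref{cor:conv-F} immediately promotes their affine convexity to geodesic convexity in $(\Omega,F_\Omega)$.
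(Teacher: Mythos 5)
Your proposal is correct and follows essentially the same route as the paper, which deduces the corollary from Proposition \ref{prop:balls-homothetic} together with Corollary \ref{cor:conv-F}; you merely make explicit the identification $B(x,\delta)=\sigma(\Omega)$ and $B'(x,\delta)=\Omega\cap\sigma'(\Omega)$ that underlies the affine convexity of the balls. The extra care you take with the left ball (the ratio $e^{\delta}-1$ possibly exceeding $1$) is a correct and worthwhile observation, consistent with the paper's remark that $S'(x,\delta)$ need not be compact.
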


  \begin{proof}
This follows from Proposition \ref{prop:balls-homothetic} and from Corollary \ref{cor:conv-F}.
\end{proof}

 We also deduce from Proposition \ref{prop:balls-homothetic} that for any $x$ and $x'$ in $\Omega$ and for any two positive real numbers $\delta$ and $\delta'$, the right spheres $S(x,\delta)$ and $S(x',\delta')$ are homothetic. 
 
 Thus, for instance, if $\Omega$ is the interior of a Euclidean sphere (respectively, of a Euclidean  ellipsoid) in $\mathbb{R}^n$, then any right sphere $S(x,\delta)$ is a Euclidean sphere (respectively, an ellipsoid).

Note that the proof of  Proposition \ref{prop:balls-homothetic} shows that for a fixed $x$, any two right spheres $S(x,\delta)$ and $S(x,\delta')$ are homothetic by a Euclidean homothety of center $x$, but that in general, a homothety which sends a sphere $S(x,\delta)$ to a sphere $S(x',\delta')$ does not necessarily send the center $x$ of $S(x,\delta)$ to the center $x'$ of $S(x',\delta')$. One can see this fact on the following example: Let $\Omega$ be an open Euclidean disk in $\mathbb{R}^n$, and let us take $x$ to be the Euclidean center of that disk. Then, by symmetry, for any $\delta >0$, the right sphere $S(x,\delta)$ is a Euclidean sphere whose Euclidean and whose metric centers are both at $x$. Now let $x'$ be a point which is close to the boundary of $\Omega$. Obviously, the Euclidean homothety that sends $\partial\Omega$ to $S(x',\delta)$ does not send the center of $\partial\Omega$ to  the (Funk-)geometric center of the sphere $S(x',\delta)$ (recall that the center of this homothety is the point $x$). Now taking a composition of two homotheties, we obtain a Euclidean homothety that sends the geometric sphere $S(x,\delta)$ to the geometric sphere $S(x',\delta)$, and that does not preserve the geometric centers of these spheres.

 \begin{remark} The property for a weak metric on a subset $\Omega$ of $\mathbb{R}^n$ that all the right spheres are homothetic is also shared by the metrics induced by Minkowski weak metrics on $\mathbb{R}^n$.
 \end{remark}



\begin{thebibliography}{99}
  
\bibitem{Paiva-Duran} J. C. \'Alvarez Paiva \& C. Dur\'an, \emph{An introduction fo Finsler geometry}, Publicaciones de la Escuela Venezolana de Matematicas (1998).


\bibitem{BBC} D. Bao, R. L. Bryant, S. S. Chern \& Z. Shen (editors),\emph{A sampler of Finsler geometry}, MSRI Publications 50, Cambridge University Press, 2004. 

\bibitem{BCS} D. Bao, S. S. Chern \& Z. Shen, \emph{An introduction to Riemann-Finsler geometry}, Graduate Texts in Mathematics, Springer Verlag, 2000.


  \bibitem{BRS} D. Bao, C. Robles \& Z. Shen, \emph{Zermelo navigation on Riemannian manifolds}, Journal of Differential Geometry 66 (2004) 377--435.
 
  \bibitem{Busemann1942} H. Busemann, \emph{Metric methods in Finsler spaces and in the foundations of geometry}, Annals of Mathematics Studies \textbf{8},  Princeton University Press  (1942).

  \bibitem{Busemann1944} H. Busemann, \emph{Local metric geometry},  Trans. Amer. Math. Soc.  \textbf{56},  (1944) 200--274.

  \bibitem{Busemann1955} H. Busemann,   \emph{The geometry of geodesics}, Academic Press  (1955), reprinted by Dover in 2005.


\bibitem{Busemann1970} H. Busemann,
 Recent synthetic differential geometry,
Ergebnisse der Mathematik und ihrer Grenzgebiete, \textbf{54},  Springer-Verlag, 1970.
    
 \bibitem{CS2005} S. S. Chern \& Z. Shen, \emph{Riemann-Finsler geometry}, Nankai Tracts in Mathematics Series, World Scientific Publishing Company, 2005.
  
\bibitem{Eggleston} H. G. Eggleston, \emph{Convexity},  Cambridge Tracts in Mathematics and Mathematical Physics No. 47, Cambridge University Press, 1958.


\bibitem{FEN} W. Fenchel,\emph{Convex cones, sets, and functions,} Mimeographed Notes by D. W. Blackett of Lectures at Princeton University, Spring Term, 1951, Princeton, 1953.

 \bibitem{Finsler}  P. Finsler,   \emph{Ueber Kurven und Flchen in allgemeinen Rumen} , G\"ottingen  (1918)  (Dissertation).
 
 \bibitem{Funk} P. Funk, \emph{\"Uber Geometrien, bei denen die Geraden die K\"urzesten sind}, Math. Ann. 101 (1929), 226-237.
   
 \bibitem{Hausdorff} F. Hausdorff,   
\emph{Set theory}, Chelsea 1957.
 
 
\bibitem{Hilbert} D. Hilbert,  \emph{Ueber die gerade Linie als k\"urzestes Verbindung zweier Punkte} (On the 
straight line as shortest Connection between two points),
Math. Ann. XLVI. 91-96 (1895).
 
 \bibitem{Hilbert2} D. Hilbert,   \emph{Grundlagen der Geometrie}, 
B. G. Teubner, Stuttgart 1899, several later editions revised by the author,
and several translations.

 \bibitem{Minkowski} H. Minkowski, \emph{Theorie der konvexen K\"orper, insbesondere Begr\"undung ihres Ober-fl\"achenbegriffs}, in Gesammelte Abhandlungen, Teubner, Leipzig, 1911.
 

\bibitem{PT1} A. Papadopoulos \& M. Troyanov, \emph{Weak metrics on Euclidean domains}, JP Journal of Geometry and Topology,  Volume 7, Issue 1 (March 2007), pp. 23-44.



\bibitem{PT2} A. Papadopoulos \& M. Troyanov, \emph{Harmonic symmetrization of  convex sets and Hilbert geometry}, in preparartion.

\bibitem{Rib} H. Ribeiro, 
     \emph{Sur les espaces \`a m\'etrique faible},  Portugaliae Math., \textbf{4} (1943) 21--40.
     

 \bibitem{Thompson} A.C. Thompson,  \emph{Minkowski geometry}. Encyclopedia of Mathematics and its Applications, 63. Cambridge University Press, Cambridge, 1996

 
\bibitem{Webster} R. Webster, \emph{Convexity}, Oxford University Press, 1994. 


 \bibitem{Zaustinsky} E. M. Zaustinsky, \emph{Spaces with nonsymmetric distance}, Mem. Amer. Math. Soc. No. 34, 1959.


\end{thebibliography}
\end{document}